\def\@tocline#1#2#3#4#5#6#7{\relax
  \ifnum #1>\c@tocdepth 
  \else
    \par \addpenalty\@secpenalty\addvspace{#2}%
    \begingroup \hyphenpenalty\@M
    \@ifempty{#4}{%
      \@tempdima\csname r@tocindent\number#1\endcsname\relax
    }{%
      \@tempdima#4\relax
    }%
    \parindent\z@ \leftskip#3\relax \advance\leftskip\@tempdima\relax
    \rightskip\@pnumwidth plus4em \parfillskip-\@pnumwidth
    #5\leavevmode\hskip-\@tempdima
      \ifcase #1
       \or\or \hskip 3em \or \hskip 2em \else \hskip 3em \fi
      #6\nobreak\relax
    \hfill\hbox to\@pnumwidth{\@tocpagenum{#7}}\par%
    \nobreak
    \endgroup
  \fi}
\theoremstyle{plain}
\newtheorem{prop}{Proposition}
\newtheorem*{prop*}{Proposition}
\newtheorem{thm}[prop]{Theorem}
\newtheorem*{thm*}{Theorem}
\newtheorem{cor}[prop]{Corollary}
\newtheorem{lem}[prop]{Lemma}
\newtheorem{question}[prop]{Question}
\newtheorem*{convention*}{Convention}
\newtheorem{thmintro}{Theorem}
\theoremstyle{definition}
\newtheorem*{defn*}{Definition}
\newtheorem{defn}[prop]{Definition}
\newtheorem{rem}[prop]{Remark}
\newtheorem*{rem*}{Remark}
\newtheorem*{rems*}{Remarks}
\newtheorem*{scholium*}{Scholium}
\newtheorem{example}[prop]{Example}
\newtheorem*{example*}{Example}
\newcommand{\ro}{\varrho}
\newcommand{\fhi}{\varphi}
\newcommand{\teta}{\vartheta}
\newcommand{\se}{\subseteq}
\newcommand{\lra}{\longrightarrow}
\newcommand{\wt}{\widetilde}
\newcommand{\inv}{^{-1}}
\newcommand{\NN}{\mathbf{N}}
\newcommand{\RR}{\mathbf{R}}
\newcommand{\ZZ}{\mathbf{Z}}
\newcommand{\GL}{\mathbf{GL}}
\newcommand{\SL}{\mathbf{SL}}
\newcommand{\gr}{\mathfrak{g}}
\newcommand{\sF}{\mathscr{F}}
\newcommand{\sL}{\mathscr{L}}
\newcommand{\sH}{\mathscr{H}}
\newcommand{\sQ}{\mathscr{Q}}
\newcommand{\sR}{\mathscr{R}}
\newcommand{\sT}{\mathscr{T}}
\newcommand{\cl}{^\mathrm{cl}}
\newcommand{\Ramen}{\mathrm{Ramen}}
\newcommand{\Core}{\mathrm{Core}}
\def\No{N\raise4pt\hbox{\tiny o}\kern+.2em}
\begin{document}
\title[Fixed points for bounded orbits in Hilbert spaces]{Fixed points for bounded orbits\\ in Hilbert spaces}
\author[M. Gheysens and N. Monod]{Maxime Gheysens and Nicolas Monod}
\address{EPFL, 1015 Lausanne, Switzerland}
\thanks{Supported in part by the ERC}
\date{3 August 2015, rev.~10 August}
\keywords{Amenable group, fixed point theorem, von Neumann problem, Dixmier problem}
\begin{abstract}
Consider the following property of a topological group $G$: every continuous affine $G$-action on a Hilbert space with a bounded orbit has a fixed point. We prove that this property characterizes amenability for locally compact $\sigma$-compact groups (e.g.~countable groups).

Along the way, we introduce a ``moderate'' variant of the classical induction of representations and we generalize the Gaboriau--Lyons theorem to prove that any non-amenable locally compact group admits a probabilistic variant of discrete free subgroups. This leads to the ``measure-theoretic solution'' to the von Neumann problem for locally compact groups.

We illustrate the latter result by giving a partial answer to the Dixmier problem for locally compact groups.
\end{abstract}
\maketitle

\tableofcontents

\section{Introduction}
A topological group $G$ is \emph{amenable} if every convex compact $G$-space ${K\neq \varnothing}$ has a fixed point. The precise meaning of this definition is that $K$ is a non-empty convex compact subset of a locally convex topological vector space $V$ and that $G$ has a continuous affine action on $K$. It is equivalent to consider only the case where this is given by a continuous affine (or linear) $G$-representation on $V$ preserving $K$. Moreover, we can assume $V$ and $K$ separable if $G$ is, for instance, locally compact $\sigma$-compact.

It is well-known that any such $K$ is isomorphic (i.e.\ affinely homeomorphic)  to a convex compact subspace of a Hilbert space~\cite[p.~31]{Klee55}. Does it follow that amenability is characterized as a fixed point property for affine actions on Hilbert spaces? after all, preserving a weakly compact set in Hilbert space is equivalent to having a bounded orbit. (The distinction between weak and strong compactness will be further discussed in Section~\ref{sec:rems}.)

The answer is a resounding \emph{no}. First of all, an action on $K$ need not extend to the ambient Hilbert space (see Section~\ref{sec:rems}). Moreover, $G$-actions on $V$  preserving $K$ sometimes have fixed points outside $K$ only, compare e.g.~\cite{Bader-Gelander-MonodINV}.

\emph{In any case, even the statement is wrong!} Indeed, there are non-amenable groups with the fixed point property for any continuous affine action on any reflexive Banach space. This holds for instance for the group of all permutations of an infinite countable set, which is non-amenable (as a discrete group). Indeed, Bergman established the strong uncountable cofinality property for this group~\cite{Bergman06} and the latter implies this fixed point property (see Prop.~1.30 of~\cite{Rosendal13}, whose proof does not use the Polish assumption).

In contrast, we prove that such a characterization does hold for countable groups and more generally locally compact $\sigma$-compact groups:

\begin{thmintro}\label{thm:main}
Let $G$ be a locally compact $\sigma$-compact group.

Suppose that every continuous affine $G$-action on a separable Hilbert space with a bounded orbit has a fixed point. Then $G$ is amenable.
\end{thmintro}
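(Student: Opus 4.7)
The plan is to prove the contrapositive. Assuming $G$ is a non-amenable locally compact $\sigma$-compact group, we wish to exhibit a continuous affine $G$-action on a separable Hilbert space with a bounded orbit but no fixed point. The overall strategy is to first build such an action for the free group $F_2$ and then transfer it to $G$ by combining the two new tools announced in the abstract: the generalization of the Gaboriau--Lyons theorem, which supplies a measure-theoretic ``free $F_2$-subgroup'' of $G$, and the moderate induction, which turns an $F_2$-representation into a $G$-representation while preserving boundedness of orbits.

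For the $F_2$ example, the linear part of the desired action is forced to be non-isometric: any affine isometric action on a Hilbert space with a bounded orbit has a fixed point at the Chebyshev center of the orbit, by a classical Bruhat--Tits-type argument. This pushes the construction into the realm of uniformly bounded but non-unitarizable representations, i.e.\ the setting of the Dixmier problem. For $F_2$, such representations exist (Pytlik--Szwarc and subsequent work), and one can produce a uniformly bounded representation $\pi : F_2 \to B(H)$ together with a bounded $\pi$-cocycle $b : F_2 \to H$ which is not a $\pi$-coboundary. The affine action $g \cdot v = \pi(g)v + b(g)$ then has the bounded orbit of the origin but no fixed vector.

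To transport $(\pi,b)$ to $G$, the classical $L^2$-induction from a subgroup (or subrelation) typically fails: integrating the cocycle against a fundamental domain enlarges norms and may make the cohomology class vanish, both of which would sabotage the goal. The moderate induction is engineered precisely to avoid these two pathologies: fed with $(\pi,b)$ and the measured free subrelation provided by the Gaboriau--Lyons-type theorem, it should output a continuous affine $G$-action whose linear part is uniformly bounded and whose cocycle remains bounded and non-trivial. Separability of the ambient Hilbert space follows from $\sigma$-compactness of $G$.

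The main obstacle is this final step: one must simultaneously guarantee that the induced affine action still has a bounded orbit (the very property that the word ``moderate'' is meant to ensure) and that the induced cocycle remains non-cohomologous to zero, so that no fixed point appears after induction. Non-triviality after induction is especially delicate, since averaging over a large measured coset space can in principle kill the cohomology class; this is where the specific design of the moderate induction and a careful choice of the measured $F_2$-subrelation inside the $G$-action must come into play.
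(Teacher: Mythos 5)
There is a genuine gap, and it is fatal to the first step of your plan: the $F_2$ building block you propose cannot exist. If the linear part $\pi$ of an affine action on a Hilbert space is \emph{uniformly} bounded, then \emph{every} orbit of the affine action $g\cdot v=\pi(g)v+b(g)$ is bounded as soon as the cocycle $b$ is bounded, and such an action always has a fixed point: one passes to an equivalent $\pi$-invariant uniformly convex norm (this is possible precisely because $\pi$ is uniformly bounded, see~\cite[Prop.~2.3]{Bader-Furman-Gelander-Monod}) and takes the circumcenter of a bounded orbit; alternatively one applies Ryll-Nardzewski to the weakly compact closed convex hull of the orbit. In other words, for uniformly bounded Hilbert-space representations every bounded $1$-cocycle is a coboundary, for any group whatsoever. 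This is exactly the content of the Remark in the Introduction, which notes that a fixed point free action with a bounded orbit must also have \emph{unbounded} orbits, hence a linear part that is \emph{not} uniformly bounded. Your appeal to Pytlik--Szwarc type representations also conflates two different cohomologies: the non-unitarisability of $F_2$ is witnessed by classes in $\mathrm{H}^1_\mathrm{b}(F_2,\sL(V))$, i.e.\ by cocycles with values in the space of \emph{operators}, which is not a Hilbert space; it produces no bounded non-coboundary cocycle with values in a Hilbert space.

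What is actually done in Section~\ref{sec:initial} is quite different: the linear part is the left translation representation on $V=\ell^2(G,\mu)/\RR$ for a \emph{moderate} probability measure $\mu$ (e.g.\ $\mu(g)=kD^{-\ell(g)}$ for a word length $\ell$); this representation is only locally bounded, not uniformly bounded, which is what leaves room for a bounded non-trivial cocycle. The cocycle itself is produced cohomologically, from a class $\omega\neq 0$ in $\mathrm{H}^2(G,\RR)$ lying in the image of $\mathrm{H}^2_\mathrm{b}(G,\RR)$ (for instance the fundamental class of a closed hyperbolic surface group), via the long exact sequences attached to $0\to\RR\to\ell^2(G,\mu)\to V\to 0$ and $0\to\RR\to\ell^\infty(G)\to E\to 0$; this yields the example for surface groups and hence for $F_4$, and then for $F_2$ by induction along the inclusion $F_4<F_2$. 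Your second stage (tychomorphisms from the Gaboriau--Lyons generalization plus moderate induction) does match the paper's route in outline, but the point you flag as ``especially delicate'' --- that induction might kill the cohomology class --- is in fact handled by a soft argument: a $G$-fixed point in the induced space would descend to an $H$-equivariant map on the finite-measure base $X$, and averaging over the $H$-invariant finite measure $\teta$ would produce an $H$-fixed point in $V$, a contradiction. The reductions you omit (passing to $G/\Ramen(G)$, splitting into the connected semisimple and totally disconnected cases, and arranging unimodularity and compact generation before applying the Gaboriau--Lyons argument) are where the real technical work of the locally compact case lies.
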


\noindent
(In view of the fixed point definition of amenability, this yields a necessary and sufficient condition and it follows furthermore that a fixed point can be found in the closed convex hull of any bounded orbit.)

\medskip

In this setting, we recall that the fixed point property \emph{without} assuming bounded orbits characterises compact groups by a result of Rosendal~\cite[Thm.~1.4]{Rosendal13}.

\medskip
Our proof takes a curious path: we first construct a very specific example of a group without the fixed point property, and then we pull ourselves by our bootstraps until we reach all non-amenable groups. This process is described below; we would be curious to know if there is a direct proof.

In the absence of a direct proof, the scenic route taken to the conclusion leads us to introduce \emph{moderate induction} and to establish the existence of \emph{tychomorphisms} from free groups to non-amenable locally compact groups, after proving a generalization of the theorem of Gaboriau and Lyons to the locally compact setting. This solves the ``measurable von Neumann problem'' for locally compact groups, see Theorem~\ref{thm:tycho} below.

\begin{rem*}
Our task is thus to construct fixed point free actions on Hilbert spaces that have a bounded orbit. We point out that such actions always have some \emph{unbounded} orbit too. Otherwise, an application of the Banach--Steinhaus principle would show that the linear part of the action is uniformly bounded in operator norm; this would however produce a fixed point, for instance by taking a circumcenter under an invariant uniformly convex norm~\cite[Prop.~2.3]{Bader-Furman-Gelander-Monod}, or using Ryll-Nardzewski.
\end{rem*}

\subsection*{Discrete outline of the proof}
We shall first explain our proof in the special case of countable groups without any topology. Our first step is to obtain some example, \emph{any example at all}, of a group $G$ with a fixed point free action on a Hilbert space with a bounded orbit.

\medskip
Let thus $\mu$ be a probability measure on $G$; this amounts to a non-negative function of sum one. Our Hilbert space is $V=\ell^2(G, \mu) /\RR$, the quotient of $\ell^2(G, \mu)$ by the subspace of constant functions. We endow $V$ with the linear representation induced by the left translation action of $G$ on $\ell^2(G, \mu)$, which indeed preserves the subspace of constants. For this action to be well-defined and to be continuous we need to impose a condition on how $\mu$ behaves under translations. It turns out that such a $\mu$ exists for every countable group; it will be constructed as a negative exponential of suitable length functions on $G$.

\medskip
To turn this linear representation into an affine action, we need a $1$-cocycle $G\to V$. The action is fixed point free and with a bounded orbit if this cocycle is non-trivial in cohomology and bounded. The extension of $G$-representations
$$0 \lra \RR \lra \ell^2(G,\mu) \lra V \lra 0$$
can be analysed by standard cohomological arguments and it suffices to show that there is an $\RR$-valued $2$-cocycle on $G$ which is non-trivial in cohomology and bounded. Such cocycles are known to exists for various groups $G$, for instance (compact hyperbolic) surface groups. Thus we have a first example.

\medskip
In order to produce more examples, we want to show that our $G$-action on $V$ can be ``induced'' to an $H$-action on another Hilbert space $W$ whenever $H$ is a group containing $G$. Classically, $W$ would be a space of maps $H/G \to V$. We shall imitate the first step of our construction by considering $\ell^2$-maps with respect to a suitable probability measure on $H/G$; once again, such a measure will exist as soon as $H$ is countable.

\medskip
At this point, we have constructed a fixed point free action on a separable Hilbert space with a bounded orbit for any countable group containing a surface group. The same statement holds with surface groups replaced by free groups since fixed point properties trivially pass to quotients. We now reach a fundamental obstacle popularized by the \emph{von Neumann problem}: the class of groups containing a free subgroup is still far from the class of non-amenable countable groups.

However, it was proved by Gaboriau--Lyons~\cite{Gaboriau-Lyons} that in an ergodic-theoretical sense, any non-amenable discrete group admits free orbits of free groups (and surface groups) as subrelations. As explained in~\cite[\S~5]{MonodICM}, such measure-theoretical analogues of subgroup embeddings, viewed as ``randembeddings'', are suitable for the induction of representations and of cocycles. Therefore, we can complete the proof of Theorem~\ref{thm:main} \emph{for discrete groups} by generalising the above induction method from subgroups to randembeddings.

\subsection*{About the non-discrete case}
A number of interesting new difficulties appear for locally compact groups. It will be helpful that we can consider separately the Lie case and the totally disconnected case, thanks to a product decomposition result based on structure theory~\cite[Thm.~3.3.3]{Burger-Monod3}.

We shall need to prove a generalization of the Gaboriau--Lyons theorem for locally compact groups. However, merely producing orbit subrelations of free groups is useless here; after all, many \emph{amenable} locally compact groups contain free subgroups. Thus, a discreteness condition will enter the generalized statement. 

The appropriate variant of the notion of randembedding used in the discrete case will be called a \emph{tychomorphism}; it is a one-sided version of measure equivalence couplings for locally compact groups. Building on our generalization of the Gaboriau--Lyons theorem, we shall prove:

\begin{thmintro}\label{thm:tycho}
Let $G$ be a locally compact second countable group.

If $G$ is non-amenable, then there is a tychomorphism from the free group $F_r$ to $G$ for all $0\leq r\leq \aleph_0$.
\end{thmintro}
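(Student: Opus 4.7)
The plan is to realise the tychomorphism as data extracted from a probability measure preserving $G$-action, by locating inside its orbit equivalence relation a discrete subrelation generated by a free action of $F_r$. The key input will be a locally compact analogue of the Gaboriau--Lyons theorem \cite{Gaboriau-Lyons}, whose conclusion in our setting must be strengthened to record the \emph{discreteness} of the $F_r$-orbits inside the $G$-orbits; a mere free subrelation would be too weak, since amenable locally compact groups may contain non-discrete free subgroups, and the tychomorphism must distinguish these situations.

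First I would apply the structure theorem \cite[Thm.~3.3.3]{Burger-Monod3} to split $G$ (modulo a compact normal kernel) into a connected Lie factor and a totally disconnected factor, one of which must be non-amenable. The connected Lie case is then essentially free: non-amenability combined with the Tits alternative produces a discrete free subgroup of $G$, from which a tychomorphism is built directly on the coupling $(G,\mathrm{Haar})$, with $F_r$ acting by left multiplication through the inclusion and $G$ acting by right multiplication. The substantive case is thus $G$ totally disconnected non-amenable. Picking a compact open subgroup $K\leq G$, the coset space $G/K$ is countable by second countability, and on a free Bernoulli-type action over $G/K$ the orbit equivalence relation of $G$ is non-amenable in Zimmer's sense. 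The goal is then to produce inside this relation a free, ergodic, probability measure preserving $F_r$-subrelation which is \emph{transverse} to the $K$-fibres, in the sense that distinct $F_r$-equivalent points lie in different $K$-cosets along the corresponding $G$-orbit. Granted such an $F_r$-subrelation, the tychomorphism is assembled on the infinite-measure space obtained by thickening the Bernoulli factor along a Borel cross-section for $K$ inside $G$, with $F_r$ acting through the subrelation and $G$ acting via the original shift; the transversality then supplies the finite-measure fundamental domain for the $F_r$-action required of a tychomorphism.

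I expect the main difficulty to lie in formulating the percolation argument underlying~\cite{Gaboriau-Lyons} over the $G$-space $G/K$ in a way that \emph{respects} the $K$-action on the ambient group: the $K$-thickening needed to convert a statement about the discrete graph $G/K$ into one about the continuous group $G$ can easily destroy the transversality/discreteness needed for a tychomorphism, which is after all precisely the reason that amenable locally compact groups containing non-discrete free subgroups do not admit one. The natural route is to run the Gaboriau--Lyons Bernoulli percolation on a Cayley-type graph for $(G,K)$, verify that the obtained cluster/free-group equivalence relation consists of edges transverse to the $K$-fibres by construction, and then check the measurable-selection and ergodicity steps carefully in this relative setting. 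Once the case $r=\aleph_0$ (or $r=2$) is established, the statement for all $0\leq r\leq\aleph_0$ follows by restriction to a free subgroup of the desired rank.
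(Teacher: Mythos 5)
Your overall architecture is the paper's: split according to whether the connected component is amenable, handle the Lie case by producing a \emph{discrete} free subgroup, and handle the totally disconnected case by a Gaboriau--Lyons-type percolation argument on a Cayley--Abels graph over $G/K$, keeping track of the $K$-fibres so as to extract a fundamental domain for the $F_2$-action. You also correctly identify the central point that the conclusion must record discreteness, not merely a free subrelation. So the strategy matches.

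However, several reductions that your plan skips are genuinely needed, and without them individual steps fail. First, \emph{unimodularity}: the percolation input (indistinguishability of clusters, the comparison of free and wired minimal spanning forests, Hjorth's theorem) is run on a unimodular transitive graph, and --- more basically --- the $F_2$-action on the coupling $\sQ\cong G\times X$ preserves the measure $m_G\times\teta$ only when $G$ is unimodular; for a non-unimodular totally disconnected $G$ your construction does not yield an amplification of $F_2$. The paper first passes to the kernel of the modular homomorphism and then to a compactly generated open subgroup (compact generation is also needed for the Cayley--Abels graph to be connected and locally finite, and you never assume it). Second, \emph{freeness}: the $G$-action on the Bernoulli space over $G/K$ (or over the edge set) is free only after killing $\Core_G(K)$; for a general compact open $K$ the core acts trivially, so the ``free Bernoulli-type action'' you invoke does not exist. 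Third, all of these reductions, as well as your use of the structure theorem, replace $G$ by quotients ($G/G^\circ$, $G/\Ramen(G)$, $G/\Core_G(K)$), and a tychomorphism to a quotient does not formally pull back to the group: the paper needs a dedicated lifting statement (Proposition~\ref{prop:liftfree}), which works \emph{because the source is a free group}, by lifting the associated cocycle through a Borel section via the universal property. Each gap is repaired exactly as in the paper (Theorem~\ref{thm:tychoGL} combined with Proposition~\ref{prop:liftfree}, Lemma~\ref{lem:comp} and Example~\ref{ex:tycho}), but as written your totally disconnected case would not go through.
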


\noindent
(This yields a necessary and sufficient criterion: in the converse direction, there are several straightforward ways to verify that a group admitting a tychomorphism from $F_r$ for some $2\leq r\leq \aleph_0$ is non-amenable; see for instance~\cite[\S 5]{MonodICM}, where discreteness is not essential.)

\medskip

Once tychomorphisms have been established, we will use them again to perform a non-standard induction of affine actions by means of a \emph{moderate} measure on the locally compact group. In this setting, the existence of moderate measure is more delicate to establish.

\subsection*{An application to the Dixmier problem}

Prompted by the classical unitarisation theorem of Sz\H{o}kefalvi-Nagy~\cite{Szo-Nagy47}, the following problem arose in 1950~\cite{Day50, Dixmier50, Nakamura-Takeda51}: are amenable groups the only \emph{unitarisable} groups, i.e.\ groups for which every uniformly bounded representation on a Hilbert space can be unitarised? We refer to Pisier~\cite{Pisier_survey, PisierLNM} for a thorough exposition and many results.

Since groups containing free subgroups are known not to be unitarisable, it is tempting to appeal to the Gaboriau--Lyons theorem (see Problem~N in~\cite{MonodICM}). This lead to partial answers~\cite{Epstein-Monod, Monod-Ozawa2009}.

\smallskip
There is no reason to restrict the Dixmier problem to discrete groups, and indeed the first examples of non-unitarisable representations were for the Lie group $\SL_2(\RR)$~\cite{Ehrenpreis-Mautner, Kunze-Stein}. Using Theorem~\ref{thm:tycho}, we establish that every non-amenable locally compact group is indeed non-unitarisable \emph{after replacing it} by an extension by an amenable kernel, a statement faithfully parallel to the main result of~\cite{Monod-Ozawa2009} for discrete groups. More precisely, the extension consists in taking a wreath product with a commutative (discrete) group:

\begin{thmintro}\label{thm:wr}
Let $G$ be any locally compact group. For any infinite abelian (discrete) group $A$, the following assertions are equivalent.

\begin{itemize}
\item[(i)] The group $G$ is amenable.

\item[(ii)] The locally compact group $A\wr_{G/O} G$ is unitarisable, where $O<G$ is a suitable open subgroup.\label{pt:wr:wr}
\end{itemize}
\end{thmintro}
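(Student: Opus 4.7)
The plan is to dispatch the two implications separately, with the first being essentially formal and the second inheriting the structure of Monod--Ozawa~\cite{Monod-Ozawa2009} via Theorem~\ref{thm:tycho}.

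For the implication (i) $\Rightarrow$ (ii), assuming $G$ amenable, the wreath product $A \wr_{G/O} G = \bigl(\bigoplus_{G/O} A\bigr) \rtimes G$ is an extension of the discrete amenable group $\bigoplus_{G/O} A$ (amenable since $A$ is abelian) by the amenable group $G$. Such an extension is locally compact and amenable, and the classical unitarisation theorem of Day, Dixmier and Nakamura--Takeda applies. This direction should require no new technology.

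For the substantive implication (ii) $\Rightarrow$ (i), I proceed contrapositively: assuming $G$ non-amenable, I construct a uniformly bounded, non-unitarisable representation of $A \wr_{G/O} G$. Theorem~\ref{thm:tycho} yields a tychomorphism from $F_2$ to $G$, and with it a natural open subgroup $O<G$, arising from the coupling as the stabiliser of a point of positive measure in a cross-section of the $G$-action, such that the discrete coset space $G/O$ is the correct set on which to form the wreath product. The Monod--Ozawa construction produces a uniformly bounded representation of $A \wr F_2$ whose non-unitarisability is witnessed by a non-trivial bounded $2$-cohomology class with coefficients in an $\ell^2$-module built from the ``lamp'' group $\bigoplus_{F_2} A$; here the hypothesis that $A$ is infinite abelian is essential to produce the class in sufficient abundance. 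The strategy is then to induce this entire package (representation and obstruction) along the tychomorphism, obtaining a uniformly bounded representation of $A \wr_{G/O} G$ whose unitarisation obstruction remains non-zero.

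The main obstacle is the induction itself. The moderate induction developed earlier in the paper is designed for affine isometric actions and their $1$-cocycles, whereas the Dixmier problem concerns uniformly bounded (not unitary) representations together with a degree-$2$ cohomological invariant. One must therefore extend the moderate-induction machinery in two directions: from isometric to uniformly bounded operators, with a controlled bound on the induced operator norm, and from degree $1$ to degree $2$, with a guarantee that a non-trivial class remains non-trivial after induction. Applying this coordinate-wise to the lamp factor $\bigoplus_{G/O} A$ and assembling with the $G$-action on the base should yield the required representation of $A \wr_{G/O} G$. Once this is achieved, Pisier's characterisation of unitarisability by the vanishing of an appropriate bounded $2$-cohomology converts the surviving obstruction into the non-unitarisability of $A \wr_{G/O} G$, completing the contrapositive.
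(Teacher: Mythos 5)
There are genuine gaps here, and the central mechanism is misidentified. First, your construction of $O$ collapses in the connected case: if $G^\circ$ is non-amenable (e.g.\ $G=\SL_2(\RR)$), a connected group has no proper open subgroup, so no ``stabiliser of a positive-measure set'' can give a useful $G/O$; the paper must split into cases via Theorem~\ref{thm:split}, handling non-amenable $G^\circ$ by classical Lie theory (connected non-compact simple Lie groups are non-unitarisable outright) and reserving the wreath-product argument for the case where $G/\Ramen(G)$ is totally disconnected, where $O$ is simply the preimage of a compact-open subgroup $K$ with trivial core obtained from van Dantzig's theorem, not from the coupling. Second, the cohomological obstruction you invoke is in the wrong degree: Pisier's criterion (Lemma~4.5 in~\cite{PisierLNM}) detects non-unitarisability through non-inner bounded \emph{derivations}, i.e.\ non-vanishing of $\mathrm{H}^1_\mathrm{b}(F_2,\sL(V))$ for a unitary representation $(\pi,V)$, and the whole argument stays in degree~$1$; there is no degree-$2$ class to induce, and no representation of $A\wr F_2$ is ever formed or induced.

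Relatedly, the role of $A$ is not to ``produce the class in sufficient abundance.'' The class lives entirely on the $F_2$ side; $A$ enters only after induction, through a unitary representation of $A$ into $L^\infty(K\backslash Z)$ chosen to generate that algebra as a von Neumann algebra (this is where infiniteness of $A$ is used). This forces $\sL(W)^N=E$ for $N=\bigoplus_{G_1/K}A$ (via Varadarajan smoothness and a martingale argument using triviality of the core of $K$), and then amenability of $N$ identifies $\mathrm{H}^1_\mathrm{b}(G_1,\sL(W)^N)$ with $\mathrm{H}^1_\mathrm{b}(N\rtimes G_1,\sL(W))$. Finally, your proposed remedy --- extending moderate induction to uniformly bounded operators and to degree~$2$ with injectivity on cohomology --- is itself an unproven and substantial claim, and it is not what the paper does: the induction here is the classical unitary induction on $W=L^2(Z,\eta;V)$ together with $L^\infty$-induction on the coefficient module $E=L^\infty_{\mathrm{w}*}(Z,\sL(V))$, whose injectivity in degree~$1$ is imported from~\cite{Monod-Shalom2}. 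As written, your argument would not compile into a proof without supplying all of these missing ingredients.
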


There are however several important differences with the discrete case. The first problem is that we do not know if unitarisability passes to (closed) subgroups in the non-discrete case. We do not even know if containing a discrete free subgroup is of any help, which should curb our enthusiasm for tychomorphisms! We can nonetheless prove the above result by combining ergodic methods with structure theory.

A smaller issue is that the category of locally compact groups does not admit full wreath products. This explains the permutational wreath product appearing in Theorem~\ref{thm:wr}. More precisely, $A\wr_{G/O} G$ denotes the topological semi-direct product of  $G$ with the discrete group $\bigoplus_{G/O} A$ which is endowed with a continuous $G$-action since $O$ is open. Thus indeed $A\wr_{G/O} G$ is locally compact, and moreover it is $\sigma$-compact (respectively second countable) when $G$ is so, provided $A$ is countable.


\section{An initial construction}\label{sec:initial}
The goal of this section is to prove that there are \emph{some} groups $G$ with a fixed point free action on a Hilbert space (by continuous affine operators) with a bounded orbit. The construction below applies for instance to the fundamental group of any closed hyperbolic surface. Since the property of having such an action can trivially be pulled back from a quotient group, it follows immediately that it also holds for free groups on at least four generators (the minimal number of generators for such a surface group).

\medskip
Consider a group $G$ (without topology) admitting a non-zero class $\omega$ in degree two cohomology with real coefficients. Assume moreover that $\omega$ can be represented by a \emph{bounded} cocycle; in other words, $\omega$ lies in the image of the comparison map
$$\mathrm{H}^2_\mathrm{b} (G, \RR)  \lra \mathrm{H}^2 (G, \RR)$$
from bounded to ordinary cohomology. This situation arises for instance when $G$ is the fundamental group of a closed hyperbolic surface and $\omega$ is given by the fundamental class of that surface, see e.g.~\cite[\S6]{Thurston_unpublished}.

Furthermore, assume that $G$ admits a probability measure $\mu$ such that the left translation linear $G$-representation on $\ell^2(G, \mu)$ is well-defined and consists of bounded operators. As we shall see in Section~\ref{sec:lengths-measures}, such a measure exists on every countable group $G$ and more generally on many locally compact groups. For the present purposes, it is much easier to justify its existence by assuming that $G$ is a finitely generated (discrete) group, which is the case in the example of surface groups. Indeed, in that case we can choose a word length $\ell$ on $G$, a constant $D>1$ large enough so that the map $g\mapsto D^{-\ell(g)}$ is summable on $G$ and a normalization constant $k>0$ given by the inverse of that sum. Then one checks that $\mu(g) = k  D^{-\ell(g)}$ gives a measure with the desired properties; we refer to Section~\ref{sec:lengths-measures} for a detailed construction in a more general topological case.

\medskip

Let now $V$ (resp.~$E$) be the quotient space of $\ell^2 (G, \mu)$ (resp.~$\ell^\infty (G)$) by the subspace of constant functions. Since $\mu$ is a probability measure, we obtain a commutative diagram
$$\xymatrix{
0 \ar[r] & \RR \ar[d]^{=}\ar[r]  &\ell^\infty (G)\ar[d]\ar[r] & E \ar[d]\ar[r] & 0 \\
0 \ar[r] & \RR \ar[r]  &\ell^2(G,\mu) \ar[r] & V \ar[r] & 0
}$$
where the rows are exact and the vertical arrows are $G$-equivariant, linear, injective and of norm~$\leq 1$.

The idea is to apply the long exact sequence of bounded cohomology~\cite[8.2.1(i)]{Monod} to the first row and the long exact sequence of ordinary cohomology to the second row. (The second row does not behave well for bounded cohomology because it carries representations that are not uniformly bounded.) More precisely, recall that $\mathrm{H}^n_\mathrm{b} (G, \ell^\infty (G))$ vanishes for all $n\geq 1$, see~\cite[4.4.1 and~7.4.1]{Monod}; therefore, by naturality of the comparison map and of the long exact sequences, we have a commutative diagram
$$\xymatrix{
0 \ar[r] & {\mathrm{H}^1_\mathrm{b} (G, E)} \ar[r]\ar[d] & {\mathrm{H}^2_\mathrm{b} (G, \RR)} \ar[r]\ar[d] & 0\\
\cdots \ar[r] &{\mathrm{H}^1 (G, V)} \ar[r] & {\mathrm{H}^2 (G, \RR)} \ar[r] & \cdots
}$$
with exact rows. Now, our assumption that $\omega\neq 0$ lies in the image of ${\mathrm{H}^2_\mathrm{b} (G, \RR)}$ implies that there is a bounded $1$-cocycle $b\colon G\to E$ whose image in ${\mathrm{H}^1 (G, V)}$ is non-trivial. In other words, the corresponding affine $G$-action on $V$ has no fixed point, although the orbit of $0\in V$ under this action is bounded since $b$ remains bounded as a map $G\to E \to V$.

\begin{rem}
A reader wishing to avoid the cohomological language can check the argument by hand as follows. Represent $\omega$ by a bounded map $c\colon G^2 \to \RR$ satisfying the cocycle relation $c(y, z) - c(xy, z) + c(x, yz) - c(x, y)=0$ for all $x,y,z\in G$. Then $b(g)$ is defined as the class modulo~$\RR$ of the function $x\mapsto c(x\inv, g)$ and all properties can be painstakingly verified.
\end{rem}

\section{Moderate lengths and measures}\label{sec:lengths-measures}
It is well-known (and obvious) that the size of a ball in a finitely generated group endowed with a word length grows at most exponentially with the radius. For general countable groups, one can choose another length function to keep this growth control (compare Remark~\ref{rem:length} below). This remains possible more generally in a topological setting, but requires a more delicate analysis which we now undertake.

\begin{defn}\label{def:length}
A \emph{length} on a group $G$ is a function $\ell\colon G\to \RR_+$ such that
\begin{enumerate}[label=(\roman*)]
\item $\ell(g) = \ell(g\inv)$ for all $g\in G$,\label{it:length:sym}
\item $\ell(gh) \leq \ell(g)+\ell(h)$ for all $g,h\in G$.\label{it:length:subadd}
\end{enumerate}
When $G$ is a locally compact group, a length $\ell$ is \emph{moderate} if moreover
\begin{enumerate}[label=(\roman*)]\setcounter{enumi}{2}
\item the ball $B(r)=\ell\inv([0,r])$ is compact for all $r\geq 0$,\label{it:length:cpct}
\item for any Haar measure $m_G$ there is $C\geq 1$ such that $m_G \big(B(r) \big)\leq C^r$ for all $r\geq 1$.\label{it:length:exp}
\end{enumerate}
\end{defn}

Observe that a moderate length is in particular lower semi-continuous thanks to~\ref{it:length:cpct}.

\begin{rem}
We have not specified in~\ref{it:length:exp} whether the Haar measure is left or right, but this is irrelevant in view of the symmetry~\ref{it:length:sym}. Moreover, it suffices to check~\ref{it:length:exp} for one Haar measure since the condition survives scaling upon changing $C$. Likewise, it suffices to check~\ref{it:length:exp} for integer $r$.
\end{rem}

\begin{example}\label{ex:Guivarch}
If $G$ is generated by a \emph{compact} symmetric neighbourhood $U$ of the identity, then the associated word length $\ell(g) = \min\{n\in\NN : g\in U^n \}$ is easily seen to be moderate. (A more refined statement can be found e.g. in Theorem~I.1 of~\cite{Guivarch73}.)
\end{example}

We shall need to go beyond the compactly generated case; notice however that $\sigma$-compactness is a necessary condition in view of~\ref{it:length:cpct}.

\begin{prop}\label{prop:length}
Let $G$ be a totally disconnected locally compact $\sigma$-compact group. Then there exists a continuous moderate length $\ell\colon G\to \NN$.
\end{prop}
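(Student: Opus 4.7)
By van Dantzig's theorem, $G$ contains a compact open subgroup $U$; I would normalise the Haar measure $m_G$ so that $m_G(U) = 1$. Since $U$ is open and $G$ is $\sigma$-compact, the discrete quotient $G/U$ is countable, and hence so is the double-coset space $U\backslash G/U$. Enumerate the non-trivial double cosets as $(D_n)_{n\geq 1}$ with $D_n = U g_n U$; each $D_n$ is a disjoint union of $[U : U \cap g_n U g_n^{-1}]$ left $U$-cosets (the intersection is open in the compact group $U$, hence of finite index), so $M_n := m_G(D_n)$ is a positive integer. Let $\sigma$ be the involution on indices with $D_{\sigma(n)} = D_n\inv$.

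My plan is to define $\ell$ as a weighted word length relative to these double cosets. Choose positive integer weights
\[ w_n := \max(n,\sigma(n)) + \lceil \log_2(M_n + M_{\sigma(n)})\rceil + 3 \quad (n\geq 1), \]
which are automatically symmetric ($w_{\sigma(n)} = w_n$), satisfy $w_n \to \infty$, and guarantee the summability bound $\sum_{n\geq 1} M_n 2^{-w_n} \leq 1/8$. Set
\[ \ell(g) := \min\bigl\{w_{n_1} + \cdots + w_{n_k} : k \geq 0,\ n_i \geq 1,\ g\in D_{n_1}\cdots D_{n_k}\bigr\}, \]
where $k = 0$ denotes the empty product $U$. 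Since $w_n \to \infty$, the constraint $\sum w_{n_i} \leq r$ selects only finitely many tuples, so the minimum is attained and integer-valued.

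Properties~\ref{it:length:sym} and \ref{it:length:subadd} of Definition~\ref{def:length} then follow routinely from the identities $(D_{n_1}\cdots D_{n_k})\inv = D_{\sigma(n_k)}\cdots D_{\sigma(n_1)}$ and $w_n = w_{\sigma(n)}$ together with concatenation of products. In particular $\ell\equiv 0$ on $U$, which by subadditivity makes $\ell$ bi-$U$-invariant; combined with the openness of $U$ this renders $\ell$ locally constant, hence continuous with values in $\NN$. Condition~\ref{it:length:cpct} is then immediate: $B(r)$ is the \emph{finite} union of the compact products $D_{n_1}\cdots D_{n_k}$ with $\sum w_{n_i} \leq r$.

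The main work lies in the exponential bound~\ref{it:length:exp}, where the bookkeeping must balance the possibly unbounded $M_n$ against the sparsity of the weights. I would first verify the submultiplicativity
\[ m_G(AB) \leq m_G(A)\,m_G(B) \]
for bi-$U$-invariant measurable sets: writing $A$ as a disjoint union of $m_G(A)$ left $U$-cosets and using $UB = B$ with the left-invariance of $m_G$ yields this at once. Iterating gives $m_G(D_{n_1}\cdots D_{n_k}) \leq M_{n_1}\cdots M_{n_k}$, so with $P(x) := \sum_{n\geq 1} M_n x^{w_n}$ one obtains
\[ m_G\bigl(B(r)\bigr) \leq \sum_{s=0}^{r} [x^s]\bigl(1-P(x)\bigr)\inv. \]
The weight choice forces $P(1/2) \leq 1/8 < 1$, so $Q(x) := (1-P(x))\inv$ has non-negative coefficients with $Q(1/2) \leq 8/7$, forcing $[x^s]Q(x) \leq (8/7)\cdot 2^s$. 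Summing gives $m_G(B(r)) \leq 3\cdot 2^r$, establishing \ref{it:length:exp} for a suitable $C\geq 1$. The generating-function bound is exactly what makes this last step work; everything else is a matter of exploiting that $K$-double cosets give a countable, bi-invariant, integer-measured decomposition of $G$.
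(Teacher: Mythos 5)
Your proof is correct, and it rests on the same basic idea as the paper's---a weighted word length over the double cosets of a compact open subgroup, with weights chosen large on ``big'' double cosets so as to tame volume growth---but the decisive estimate is carried out quite differently. The paper fixes a symmetric generating set $S=KSK$, forms the Cayley--Abels graph $\gr(G,K,S)$, assigns to each $K$-orbit in $S/K$ a weight that is a power of $2$ dominating both the orbit size and all previously chosen weights, and then bounds the ball sizes $\beta(n)$ through the recursion $\beta(n)\leq 2\sum_{p=0}^{n-1}\beta(p)$, obtained by an averaging trick that exploits the sparsity of the weights; it must symmetrize at the end via $\ell(g)=\ell_0(g)+\ell_0(g^{-1})$. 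You instead take \emph{all} double cosets, build the symmetry $w_{\sigma(n)}=w_n$ into the weights from the start, and replace the recursion by two clean ingredients the paper does not use: the submultiplicativity $m_G(AB)\leq m_G(A)\,m_G(B)$ for bi-$U$-invariant sets, and the generating-function bound $[x^s](1-P(x))^{-1}\leq \tfrac{8}{7}\cdot 2^s$ following from $P(1/2)\leq 1/8$. Your route is arguably tidier---the explicit weight formula makes summability transparent, and the Haar-measure submultiplicativity absorbs the combinatorial bookkeeping that the paper does by hand with the functions $\fhi$ and $\beta$---while the paper's version has the side benefit of staying inside the locally finite connected Cayley--Abels graph, whose powers $\gr^n(G,K,S)$ it needs anyway in Section~\ref{sec:GL}. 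All the individual steps of your argument check out: the coefficients of $P$ are finite because $w_n\geq n$, the minimum defining $\ell$ is attained because $w_n\geq 4$ and $w_n\to\infty$, and $B(r)$ is indeed a finite union of compact products of double cosets.
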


Christian Rosendal pointed out to us that a stronger statement was established as one of the main results of the manuscript~\cite{Haagerup-Przybyszewska} (Theorem~5.3). We still provide the proof below since it is shorter and sufficient for our purposes.

\medskip
Before proceeding to the proof, we record the construction of general \emph{Cayley--Abels graphs} originating in~\cite[Beispiel~5.2]{Abels74}, see also~\cite[\S3.4]{Burger-Monod3}. The basic data is a locally compact group $G$, a compact-open subgroup $K<G$ and a subset $S\se G$ satisfying
$$S=S\inv \ \text{ and }\ S=KSK.$$
The associated Cayley--Abels graph $\gr(G,K,S)$ is the graph on the vertex set $G/K$ with edge set $\{(gK, gsK) : g \in G, s \in S\}$. Thus $G$ acts by automorphisms on $\gr(G,K,S)$. This graph is connected if and only if $S$ generates $G$ and it is locally finite if and only if $S$ is compact.

\medskip
For later use in Section~\ref{sec:GL}, we shall extend this construction as follows under the assumptions that $S$ generates $G$, is compact and contains the identity: For every integer $n\geq 1$, let $\gr^n(G,K,S)$ be the (multi-)graph on the vertex set $G/K$ with as many edges between two given vertices as there are paths of length $n$ in $\gr(G,K,S)$ connecting them. Then each $\gr^n(G,K,S)$ remains locally finite, connected and with a vertex-transitive $G$-action.

\begin{proof}[Proof of Proposition~\ref{prop:length}]
 Let $K$ be a compact open subgroup of $G$ and $S$ be any symmetric generating set; upon replacing it by $KSK$, we can moreover assume $S = KSK$. Consider the graph $\gr(G,K,S)$. We will give weights to its edges and then consider the induced path length on the vertex set. There is a natural $S/K$-labeling on the edges; however, it is not invariant under the action of $G$. Indeed, an $sK$-labeled edge and a $tK$-labeled edge are in the same $G$-orbit if and only if $KsK = KtK$, i.e.\ if $sK$ and $tK$ are in the same orbit for the natural action of $K$ on $S/K$. The latter orbit is of size $|K : K \cap s K s\inv|$ and hence finite since $K$ is compact and open. We enumerate these $K$-orbits, recalling that $S/K$ is countable, as $G$ is $\sigma$-compact. The weight is given inductively to each element of the $i$-th orbit as the smallest power of $2$ that is at least as large as the size of the $i$-th orbit and strictly larger than all the weights previously given.

Having attributed a left $K$-invariant weight to each element of $S/K$, we obtain a $G$-invariant weight on the edges of $\gr(G,K,S)$.  Consider now the vertex set $G/K$ with the metric given by the shortest (weighted) path distance. The $G$-action on $G/K$ is isometric for this distance; in particular, the size of a ball of radius $n$ does not depend on its center and we denote by $\beta(n)$ this number, which is finite. Write also $\fhi(k)$ for the number of elements of weight $k$ in $S/K$. Observing that a path of length~$\leq n$ has to start with an edge of weight~$\leq n$, we get a coarse bound:
\begin{equation*}
 \beta(n) \leq \fhi(1) \beta(n - 1) + \fhi(2)\beta(n - 2) + \dots + \fhi(n-1)\beta(1) + \fhi(n).
\end{equation*}
Most of the terms in the latter sum vanish because, by our choice of weights, $\fhi(k)$ is zero whenever $k$ is not a power of $2$. Moreover, since $\beta$ is a non-decreasing function, we have
$$\beta(r) \leq \frac{\beta(r) + \beta(r+1) + \dots + \beta(r+m)}{m+1}$$
for all $r,m$. Putting these two observations together for $r=n-2^j$ and $m=2^{j-1}-1$, we get:
\begin{equation*}
 \beta(n) \leq \fhi(1)\beta(n-1) + \sum_{j = 1}^{\log_2 n} \frac{\fhi(2^j)}{2^{j-1}} \sum^{2^{j-1} + 1}_{p=2^j} \beta(n - p) \leq 2 \sum_{p=0}^{n-1} \beta(p),
\end{equation*}
where the last inequality comes from $\fhi(2^j) \leq 2^j$, which holds by our choice of weights. This estimate implies that $\beta$ grows at most exponentially, indeed that $\beta(n)\leq 2\cdot 3^{n-1} < 3^n$ for all $n\geq 1$.

Getting back to the group $G$, define $\ell_0 (g)$ as the distance from $K$ to $gK$ in $G/K$ for the above weighted distance. So far, $\ell_0$ is a continuous function satisfying~\ref{it:length:subadd} and~\ref{it:length:cpct} for the $\ell_0$-balls $B_{\ell_0}(r)$. Moreover, for any \emph{left} Haar measure, we have $m_G\big(B_{\ell_0} (r)\big) = \beta(r) m_G(K)$, hence the balls grow at most exponentially. We normalize $m_G$ so that $m_G(K)$ is an integer. Finally, set $\ell(g) = \ell_0 (g) + \ell_0 (g^{-1})$. One checks that $\ell$ has all the desired properties.
\end{proof}

\begin{rem}\label{rem:length}
For a discrete countable group $G$, the above argument can be considerably shortened: choose any generating set $S$ such that $S \cap S\inv$ contains only involutions and enumerate $S = \{s_1, s_2, \dots\}$. Then the weighted word length where $s_i$ and $s_i^{-1}$ are given weight $i$ will satisfy properties~\ref{it:length:sym}--\ref{it:length:exp}.

Alternatively, one can simply restrict to $G$ the word length of a finitely generated group containing $G$, which exists by~\cite{HNN}. This overkill, however, cannot be generalized to non-discrete groups because they need not embed into compactly generated groups~\cite{Caprace-Cornulier14}; thus the need for Proposition~\ref{prop:length} remains.
\end{rem}

\begin{defn}\label{def:measure}
A \emph{moderate measure} on a locally compact group $G$ is a probability measure $\mu$ in the same measure class as the Haar measures and such that
\begin{enumerate}[label=(\roman*)]
\item for all $g \in G$, the Radon--Nikod\'ym derivative $\mathrm{d}g \mu/\mathrm{d}\mu$ is essentially bounded on $G$,
\item the map $g \mapsto \left\Vert \mathrm{d}g \mu/\mathrm{d}\mu \right\Vert_\infty$ is locally bounded on $G$.
\end{enumerate}
\end{defn}

The point of this definition is that it readily implies the following:

\smallskip\itshape\noindent
If $\mu$ is a moderate measure on $G$, then the left translation representation of $G$ on $L^2(G,\mu)$ is a well-defined continuous linear representation which is locally bounded (in operator norm).

\smallskip\upshape
To be completely explicit, the (non-unitary) representation above is defined by $(gf)(x) = f(g\inv x)$ for $g,x\in G$ and $f\in L^2(G,\mu)$. In particular, the constant functions constitute a $G$-invariant subspace. The statement above is a particular case of the \emph{moderate induction} that will be investigated in detail in Section~\ref{sec:induction}, to which we refer for a proof.

\medskip
We shall obtain moderate measures thanks to moderate lengths:

\begin{prop}\label{prop:exp}
If $\ell$ is a moderate length on a locally compact group $G$ and $m_G$ a left Haar measure, then the measure $\mu$ defined by
$$\mathrm{d}\mu (x) = k D^{-\ell(x)} \mathrm{d}m_G (x)$$
is moderate when $D\geq 1$ is large enough and $k>0$ is a suitable normalization constant.
\end{prop}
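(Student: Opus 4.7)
My plan is in three steps. First, I would verify that the normalization constant exists, i.e.\ that $I(D) := \int_G D^{-\ell(x)}\,\mathrm{d}m_G(x)$ is finite for $D$ large enough. Splitting $G$ into the shells $B(n+1)\setminus B(n)$ and using $D^{-\ell(x)}\leq D^{-n}$ there (with the convention $B(-1)=\varnothing$), Definition~\ref{def:length}\ref{it:length:exp} yields
$$I(D) \leq \sum_{n=0}^{\infty} D^{-n} m_G\big(B(n+1)\big) \leq C\sum_{n=0}^{\infty}(C/D)^n,$$
which converges for any $D>C$. Taking $k:=1/I(D)$ then produces a probability measure in the same measure class as $m_G$, since $kD^{-\ell}>0$ everywhere.

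Second, I would compute the Radon--Nikod\'ym derivative. A change of variables using left invariance of $m_G$ yields $(g\mu)(E) = \int_E kD^{-\ell(g\inv y)}\,\mathrm{d}m_G(y)$, so that
$$\frac{\mathrm{d}g\mu}{\mathrm{d}\mu}(y) = D^{\ell(y)-\ell(g\inv y)}.$$
Applying subadditivity~\ref{it:length:subadd} to the factorizations $y=g\cdot(g\inv y)$ and $g\inv y=g\inv\cdot y$, together with symmetry~\ref{it:length:sym}, gives $|\ell(y)-\ell(g\inv y)|\leq \ell(g)$, whence $\|\mathrm{d}g\mu/\mathrm{d}\mu\|_\infty \leq D^{\ell(g)}<\infty$. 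This establishes part~(i) of Definition~\ref{def:measure}.

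The hard part is to show that $g\mapsto D^{\ell(g)}$ is locally bounded, since a moderate length is only lower semicontinuous (by~\ref{it:length:cpct}) and could a priori be unbounded on a compact set. I would invoke the Baire category theorem: $G$ is locally compact $\sigma$-compact, hence Baire, and the compact balls $B(n)$ form an increasing countable closed cover of $G$, so some $B(n_0)$ has non-empty interior $U$. Fixing $u\in U$, for any $g\in G$ and any $h\in gu\inv U$, the factorization $h=gu\inv v$ with $v\in B(n_0)$ yields $\ell(h)\leq \ell(g)+2n_0$. Hence $\ell$ is bounded by $\ell(g)+2n_0$ on the open neighbourhood $gu\inv U$ of $g$; a finite cover argument then shows $\ell$ is bounded on every compact subset of $G$, which gives part~(ii) of Definition~\ref{def:measure} and completes the proof.
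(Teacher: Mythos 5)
Your proof is correct and follows essentially the same route as the paper: choose $D>C$ to make $D^{-\ell}$ integrable, compute $\mathrm{d}g\mu/\mathrm{d}\mu(x)=D^{\ell(x)-\ell(g\inv x)}\leq D^{\ell(g)}$ via left invariance and subadditivity, and then use Baire's theorem on the closed cover by the balls $B(n)$ together with $B(r)B(s)\se B(r+s)$ to get local boundedness of $\ell$. You merely spell out the shell estimate for finiteness and the Baire/finite-subcover step in more detail than the paper does.
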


\begin{proof}
Choose any $D>C$, where $C\geq 1$ is as in Definition~\ref{def:length}\ref{it:length:exp}. Since the function $\ell$ is Borel thanks to Definition~\ref{def:length}\ref{it:length:cpct}, it follows that the formula $\mathrm{d}\mu (x) = k D^{-\ell(x)} \mathrm{d}m_G (x)$ makes sense and defines a measure in the same class as $m_G$ for any $k>0$.  In particular, the Radon--Nikod\'ym derivative of Definition~\ref{def:measure} exists. This measure is finite because of $D>C$ and hence it can be normalized by the appropriate choice of $k$. It now suffices to show that for every compact set $U\se G$ the function $\mathrm{d}g \mu/\mathrm{d}\mu (x)$ is bounded uniformly over $g\in U, x\in G$. Since $m_G$ is left invariant, we have $\mathrm{d}g \mu/\mathrm{d}\mu (x) = D^{\ell(x) - \ell(g\inv x)}$, which is bounded above by $D^{\ell(g)}$ in view of Definition~\ref{def:length}\ref{it:length:subadd}. Therefore, it only remains to see that the $\ell$-balls $B(r)$ contain $U$ when $r$ is sufficiently large. Since $U$ is compact and $B(r)$ closed, this a direct application of Baire's theorem using the relation $B(r) B(s) \se B(r+s)$ which follows from Definition~\ref{def:length}\ref{it:length:subadd}.
\end{proof}

\begin{cor}\label{cor:ex:moderate}
Let $G$ be a locally compact group. If $G$ is either compactly generated (e.g.\ connected) or totally disconnected and $\sigma$-compact, then it admits a moderate measure.
\end{cor}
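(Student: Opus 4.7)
The plan is to combine the three results established earlier in this section: in each of the two cases, produce a moderate length, and then invoke Proposition~\ref{prop:exp} to turn it into a moderate measure.

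First I would handle the compactly generated case. By Example~\ref{ex:Guivarch}, if $G$ admits a compact symmetric neighbourhood $U$ of the identity that generates $G$, then the associated word length $\ell(g)=\min\{n\in\NN: g\in U^n\}$ is moderate. For the parenthetical ``e.g.\ connected'' clarification, I would remark that any identity neighbourhood in a connected locally compact group generates the whole group (the subgroup it generates is open, hence closed and equal to $G$ by connectedness), so a compact symmetric identity neighbourhood is generating, reducing the connected case to the compactly generated one. For the totally disconnected $\sigma$-compact case, Proposition~\ref{prop:length} directly supplies a continuous moderate length $\ell\colon G\to\NN$.

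Having a moderate length $\ell$ in either case, I would then apply Proposition~\ref{prop:exp}: choosing $D\geq 1$ large enough so that $\sum$ (or rather $\int$) of $D^{-\ell(x)}$ converges against a Haar measure $m_G$, and choosing $k>0$ as the reciprocal of that integral, the measure $\dd\mu(x)=kD^{-\ell(x)}\dd m_G(x)$ is moderate. This finishes the corollary.

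There is no real obstacle; all the work has been done in Example~\ref{ex:Guivarch}, Proposition~\ref{prop:length}, and Proposition~\ref{prop:exp}. The only point requiring a brief remark is the reduction of ``connected'' to ``compactly generated,'' which is immediate from the openness of the subgroup generated by any identity neighbourhood.
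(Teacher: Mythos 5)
Your proof is correct and follows exactly the paper's own argument: Example~\ref{ex:Guivarch} (resp.\ Proposition~\ref{prop:length}) supplies a moderate length in the compactly generated (resp.\ totally disconnected $\sigma$-compact) case, and Proposition~\ref{prop:exp} converts it into a moderate measure. Your added remark that a connected locally compact group is generated by any compact symmetric identity neighbourhood is the standard justification of the parenthetical ``e.g.\ connected'' and is left implicit in the paper.
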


\begin{proof}
In view of Proposition~\ref{prop:exp}, the first case follows from Example~\ref{ex:Guivarch} and the second from Proposition~\ref{prop:length}.
\end{proof}

We record the following for later use.

\begin{lem}\label{lem:measindfin}
 Let $G$ be a locally compact group and $H$ be an open subgroup of finite index in $G$. If $H$ admits a moderate measure, then so does $G$.
\end{lem}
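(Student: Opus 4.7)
The plan is to take the given moderate measure $\nu$ on $H$ and average its translates over a finite set of coset representatives. Write $G = \bigsqcup_{i=1}^n g_i H$ with $g_1 = e$, and define
$$\mu = \frac{1}{n} \sum_{i=1}^n (g_i)_* \nu,$$
where $(g_i)_* \nu$ denotes the push-forward by left-translation. Then $\mu$ is a probability measure assigning mass $1/n$ to each coset. Since $H$ is open, a Haar measure $m_G$ restricts to a Haar measure on $H$, hence $\nu$ lies in the class of $m_G|_H$; translating, each $(g_i)_*\nu$ lies in the class of $m_G|_{g_i H}$, so $\mu$ is in the same measure class as $m_G$ as required.

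Next I would compute the Radon--Nikod\'ym derivative by encoding the left-translation action on cosets. For $g \in G$, let $\sigma(g) \in \mathrm{Sym}(n)$ and $h_i(g) \in H$ be determined by $g g_i = g_{\sigma(g)(i)}\, h_i(g)$. A short direct calculation gives that $dg\mu/d\mu$, restricted to the coset $g_{\sigma(g)(i)} H$, is the $g_{\sigma(g)(i)}$-translate of $d h_i(g)\nu/d\nu$. Condition~(i) of Definition~\ref{def:measure} for $\mu$ thus collapses to
$$\left\| \frac{dg\mu}{d\mu}\right\|_\infty = \max_{1\leq i \leq n} \left\| \frac{d h_i(g)\nu}{d\nu}\right\|_\infty,$$
which is finite by (i) applied to each $h_i(g) \in H$.

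For the local boundedness condition (ii), the decisive point is that $\sigma$ is \emph{locally constant} on $G$, precisely because $H$ is open: the map $g \mapsto gH$ is locally constant, and so is each $g \mapsto gg_i H$. Fix $g_0\in G$ and a compact neighborhood $U$ of $g_0$ on which $\sigma \equiv \sigma(g_0)$. Then each $g \mapsto h_i(g) = g_{\sigma(g_0)(i)}\inv g g_i$ is continuous on $U$, so its image is a compact subset of $H$; the local boundedness of $h\mapsto \|dh\nu/d\nu\|_\infty$ on that compact set, combined with the finite maximum over $i$, yields the desired local bound for $\mu$.

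The argument is essentially finite averaging plus a Radon--Nikod\'ym chase, and the only genuine ingredient (beyond bookkeeping with coset representatives) is the openness of $H$, which promotes the measurable data $\sigma(g)$, $h_i(g)$ to locally constant/continuous data and lets condition (ii) descend from $\nu$ to $\mu$; I do not foresee a serious obstacle.
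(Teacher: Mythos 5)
Your proposal is correct and follows essentially the same route as the paper: average the translates of the moderate measure over coset representatives, track the Radon--Nikod\'ym derivative through the permutation action $g g_i = g_{\sigma(g)(i)} h_i(g)$ on cosets, and use openness of $H$ to ensure $g \mapsto h_i(g)$ carries compact sets into (relatively) compact subsets of $H$ where local boundedness applies. Your version is marginally more explicit in identifying the derivative coset by coset (yielding a max rather than the paper's sum-of-products bound), but this is only a cosmetic difference.
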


\begin{proof}
Let $\{r_1, \dots, r_n\}$ be a set of representatives in $G$ for the left cosets of $H$. Let $\mu_H$ be a moderate measure on $H$. Define the measures $\mu_i$ on $G$ by $\mu_i = \frac{1}{n} r_i \mu_H$ and let $\mu=\mu_1+\cdots +\mu_n$. As $H$ is open, $\mu$ is a probability measure in the class of the Haar measures. Let $g \in G$. As each $\mu_i$ is absolutely continuous with respect to $\mu$ (actually, bounded by $\mu$), we may write
\begin{equation*}
 \frac{\mathrm{d}g \mu}{\mathrm{d}\mu} = \sum_{i=1}^n \frac{\mathrm{d}g \mu_i}{\mathrm{d}\mu}.
\end{equation*}
For each $i$, denote by $h_{g, i}$ the unique element of $H$ defined by $g r_i = r_j h_{g, i}$ for some $j$ (which also depends on $g$ and $i$). Note that $g \mu_i$ is then equivalent to $\mu_j$. We thus have
\begin{equation*}
 \frac{\mathrm{d}g \mu_i}{\mathrm{d}\mu} = \frac{\mathrm{d}g \mu_i}{\mathrm{d}\mu_j} \cdot  \frac{\mathrm{d} \mu_j}{\mathrm{d}\mu}.
\end{equation*}
The first factor is bounded by $\left\Vert \mathrm{d}h_{g, i} \mu_H/\mathrm{d}\mu_H \right\Vert_\infty$ while the second one is bounded by~$1$. To ensure that $g \mapsto \left\Vert \mathrm{d}g \mu/\mathrm{d}\mu \right\Vert_\infty$ is indeed locally bounded, it is therefore enough to check that for each $i$ the map $g \mapsto h_{g, i}$ sends compact sets of $G$ to relatively compact sets of $H$ since $\mu_H$ is moderate. This holds true because $H$ is open.
\end{proof}

\section{Tychomorphisms}\label{sec:tych}
\begin{flushright}
\begin{minipage}[t]{0.75\linewidth}\itshape\small
Simplicibus itaque verbis gaudet Mathematica Veritas, cum etiam per se simplex sit Veritatis oratio
\begin{flushright}
\upshape\small
Tycho Brahe, \emph{Epistol\ae\ astronomic\ae}, Uraniborg 1596\\
(General preface, p.~23 l.~32 in Dreyer's edition;\\
absent from the 1601 edition of Levinus Hulsius)
\end{flushright}
\end{minipage}
\end{flushright}
\vspace{3mm}
The goal of this section is to discuss basic facts about \emph{tychomorphisms}, a probabilistic variant of closed (e.g.\ discrete) subgroup embeddings in locally compact groups. The closedness condition, which is essential in the context of amenability, is an aspect absent from the analogous concepts of ``orbit subrelation'' and ``randembedding'' of discrete groups. Nevertheless, it will not appear in topological terms, but rather as an ergodic-theoretical smoothness condition following from the definitions below; this reformulation is a special case of the Glimm--Effros dichotomy.

\medskip

Let $G$ be a locally compact group. The Haar measures of $G$ define a canonical measure class on $G$, which is standard if $G$ is second countable. A \emph{measured $G$-space} is a measured space $(\Sigma, m)$ together with a $G$-action such that the action map $G\times \Sigma \to \Sigma$ is non-singular. We shall always consider standard measured spaces, so that all basic tools such as the Fubini--Lebesgue theorem are available. Unless otherwise stated, groups shall act on themselves by left multiplication. We will note by $m_G$ a choice of a (non-zero) left Haar measure on $G$ and by $\widecheck{m}_G$ the corresponding right Haar measure defined as the image of $m_G$ by the inverse map. The modular homomorphism $\Delta_G \colon G \to \RR_+^\ast$ is defined by $\mathrm{d}m_G (xg) = \Delta_G (g) \mathrm{d}m_G (x)$; recall moreover that $\Delta_G \mathrm{d}\widecheck{m}_G  = \mathrm{d}m_G $.

\begin{defn}
Let $(\Sigma, m)$ and $(\Sigma', m')$ be two measured $G$-spaces. We say that $\Sigma$ is an \emph{amplification} of $\Sigma'$ if there is a measure preserving $G$-equivariant isomorphism bewteen $(\Sigma, m)$ and the product of $(\Sigma', m')$ with a measured space $(X,\teta)$ endowed with the trivial $G$-action. The amplification is said to be \emph{finite} if $\teta$ is finite. Remark that if $G$ preserves one of the measures $m$ or $m'$, then it also preserves the other one.
\end{defn}

\begin{example}\label{ex:subgroup}
Let $H$ be a closed subgroup of a locally compact second countable group $G$ and let $m_H, m_G$ be left Haar measures. Then the left $H$-action on $(G, m_G)$ is an amplification of $(H, m_H)$ (see e.g.~\cite{Ripley76}). The latter is finite if $H$ has finite invariant covolume in $G$.
\end{example}

\begin{example}\label{ex:count}
If $G$ is countable (hence discrete), then $(\Sigma, m)$ is an amplification of $G$ if and only if $G$ admits a measurable fundamental domain in $\Sigma$.
\end{example}

\begin{defn}
Let $G$ and $H$ be locally compact second countable groups. A \emph{tychomorphism} from $H$ to $G$ is a measured $G\times H$-space $(\Sigma, m)$ which as a $G$-space is a finite amplification of $(G, \widecheck{m}_G)$ and as an $H$-space is an amplification of $(H, m_H)$.
\end{defn}

Note that $H$ preserves the measure $m$ on $\Sigma$, whereas $G$ does so if and only if it is unimodular. For unimodular groups, the symmetric situation (i.e.\ when $\Sigma$ is also finite as an amplification of $H$) is the usual \emph{measure equivalence} studied e.g.\ in~\cite{Furman_z60}.

\begin{rem}\label{rem:cocycle}
Let $(\Sigma, m)$ be a tychomorphism from $H$ to $G$ and consider isomorphisms $(\Sigma, m) \cong (G, \widecheck{m}_G)\times (X,\teta)$ and $(\Sigma, m) \cong (H, m_H)\times (Z,\eta)$ as in the definition. If we transport the $H$-action through the first isomorphism, we obtain a non-singular $H$-action on $X$ and a measurable cocycle $\alpha\colon H\times X \to G$ such that
$$h (g, s) = (g \alpha(h, s)\inv , hs)$$
because the $H$-action must commute with the $G$-action on $G\times X$. In particular, this $H$-action on $X$ preserves the finite measure $\teta$. On the other hand, if we transport the $G$-action through the second isomorphism, we obtain also a non-singular $G$-action on $Z$ and a measurable cocycle $\beta\colon G \times Z \to H$ such that 
$$g (h, s) = (h \beta(g, s)\inv , gs)$$
but now the $G$-action on $Z$ might not preserve the measure $\eta$. Indeed, one checks that the Radon--Nikod\'ym derivative $\mathrm{d}g\eta/ \mathrm{d}\eta$ is equal to $y \mapsto \Delta_H (\beta(g\inv, y)) \Delta_G (g)$.
\end{rem}

\begin{example}\label{ex:tycho}
If $H$ is a closed subgroup of a locally compact second countable group $G$, then $(G, m_G)$ is a natural tychomorphism from $H$ to $G$. Indeed, it is an amplification of $(H, m_H)$ by Example~\ref{ex:subgroup}; the \emph{right} $G$-action on $(G, m_G)$ commutes with $H$ and is intertwined by the inverse map to $(G, \widecheck{m}_G)$, hence is a finite (trivial) amplification of $(G, \widecheck{m}_G)$.
\end{example}

The next lemma shows how to compose tychomorphisms.

\begin{lem}
\label{lem:comp}
Let $G_1$, $G_2$ and $H$ be locally compact second countable groups. If there is a tychomorphism from $G_1$ to $H$ and another from $H$ to $G_2$, then there is one from $G_1$ to $G_2$.
\end{lem}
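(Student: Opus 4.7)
The plan is to form the fibered product $\Sigma=(\Sigma_1\times\Sigma_2)/H$, where $H$ acts diagonally via $h\cdot(s_1,s_2)=(hs_1,hs_2)$; this action commutes with $G_1\times G_2$, so the quotient will naturally inherit a $G_1\times G_2$-action. To carry this out rigorously and to identify the amplification structures on the quotient, I would invoke the $H$-space decompositions coming from each tychomorphism: $\Sigma_1\cong(H,\widecheck{m}_H)\times(X_1,\teta_1)$ (finite amplification, so $\teta_1(X_1)<\infty$) and $\Sigma_2\cong(H,m_H)\times(Z_2,\eta_2)$, with $H$ acting by left multiplication on the respective $H$-factor. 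Writing $\Sigma_1\times\Sigma_2\cong H\times X_1\times H\times Z_2$, the change of variables $(h_1,x_1,h_2,z_2)\mapsto(h_1,x_1,h_1\inv h_2,z_2)$ is measure-preserving by left-invariance of $m_H$, and it conjugates the diagonal $H$-action into left multiplication on the first $H$-coordinate alone. This exhibits $\Sigma_1\times\Sigma_2$ as a free, smooth principal $H$-bundle, so $\Sigma$ is a standard measured space that I would identify with $X_1\times H\times Z_2$ equipped with the measure $\teta_1\otimes m_H\otimes\eta_2$.

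I would then transport the $G_2$-action to this model. Since $G_2$ acts trivially on $\Sigma_1$ and the change of variables above is the identity on the $G_2$-relevant factors, the residual $G_2$-action on $\Sigma$ is trivial on $X_1$ and coincides on $H\times Z_2$ with the original $G_2$-action on $\Sigma_2$, governed by the cocycle $\beta_2$ of Remark~\ref{rem:cocycle}. Hence $\Sigma\cong X_1\times\Sigma_2$ as a $G_2$-space; since $\Sigma_2$ is a finite amplification of $(G_2,\widecheck{m}_{G_2})$ and $\teta_1$ is finite, $\Sigma$ is itself a finite $G_2$-amplification of $(G_2,\widecheck{m}_{G_2})$.

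For the $G_1$-side one has to be slightly more careful, because the two tycho decompositions use different Haar measures on $H$. A direct transport yields the skew-product action $g_1\cdot(x_1,h')=(g_1 x_1,\alpha_1(g_1,x_1)h')$ on $X_1\times H$ (trivial on $Z_2$), with $\alpha_1\colon G_1\times X_1\to H$ the cocycle of Remark~\ref{rem:cocycle} attached to $\Sigma_1$. The map $(x_1,h')\mapsto(h'\inv,x_1)$ is then a $G_1$-equivariant measure isomorphism from $(X_1\times H,\teta_1\otimes m_H)$ onto $\Sigma_1=(H\times X_1,\widecheck{m}_H\otimes\teta_1)$, using that inversion on $H$ exchanges $m_H$ and $\widecheck{m}_H$. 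Thus $\Sigma\cong\Sigma_1\times Z_2$ as a $G_1$-space, so the existing amplification $\Sigma_1\cong(G_1,m_{G_1})\times(Z_1,\eta_1)$ yields $\Sigma\cong(G_1,m_{G_1})\times Z_1\times Z_2$, as required.

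The main obstacle will be the measure bookkeeping: one must consistently track left versus right Haar on $H$ throughout, and verify that the cocycles produced by $\Sigma_1$ and $\Sigma_2$ intertwine correctly under the various identifications. The inversion trick on $H$, together with the relation $\Delta_H\,\dd\widecheck{m}_H=\dd m_H$ and a careful Fubini argument justifying the smoothness of the diagonal $H$-action, is the main technical device that glues everything together.
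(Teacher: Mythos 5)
Your proof is correct and is essentially the paper's own argument: your fibered product $(\Sigma_1\times\Sigma_2)/H$, identified via the cross-section with $X_1\times (H,m_H)\times Z_2$, carries exactly the action $(g_1,g_2)(z_1,z_2,h)=(g_1z_1,g_2z_2,\alpha_1(g_1,z_1)\,h\,\alpha_2(g_2,z_2)\inv)$ that the paper writes down directly, and you use the same inversion trick $h\mapsto h\inv$ (exchanging $m_H$ and $\widecheck{m}_H$) to exhibit the $G_1$-amplification. The only difference is presentational: the paper skips the quotient construction and simply defines the composed coupling by this explicit formula.
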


\begin{proof}
 Let $(G_1, m_{G_1}) \times X_1 \cong \Sigma_1 \cong (H, \widecheck{m}_H) \times Z_1$ and $(G_2, \widecheck{m}_{G_2}) \times X_2 \cong \Sigma_2 \cong (H, m_H) \times Z_2$  be the tychomorphisms, with $Z_1$ and $X_2$ having finite measure. Consider the corresponding cocycles $\alpha_i\colon G_i \times Z_i \to H$ and the non-singular actions of $G_i$ on $Z_i$ as in Remark~\ref{rem:cocycle}. We endow the space $\Sigma := Z_1 \times Z_2 \times (H, m_H)$ with a non-singular action of $G_1 \times G_2$ by defining
$$(g_1, g_2) (z_1, z_2, h) = (g_1 z_1, g_2 z_2, \alpha_1 (g_1, z_1) h \alpha_2 (g_2, z_2)\inv).$$
The measure-preserving equivariant isomorphisms given by the tychomorphisms show that $\Sigma$ is $G_2$-equivariantly isomorphic to $(G_2, \widecheck{m}_{G_2}) \times Z_1 \times X_2$, hence it is a finite amplification of $(G_2, \widecheck{m}_{G_2})$. Likewise, $\Sigma$ is $G_1$-equivariantly isomorphic to $(G_1, m_{G_1}) \times Z_2 \times X_1$, hence it is an amplification of $G_1$, except that this time we need to precompose the isomorphisms by the map $\Sigma \to Z_1 \times Z_2 \times (H, \widecheck{m}_H)\colon (z_1, z_2, h) \mapsto (z_1, z_2, h\inv)$.
\end{proof}

By the universal property, an embedding of a free group into a quotient group can be lifted; moreover the lift is discrete if the original embedding was so. It turns out that the corresponding fact holds for tychomorphisms.

\begin{prop}\label{prop:liftfree}
Let $\wt{G}$ be a locally compact second countable group, $N$ a closed normal subgroup and $G$ the quotient group $\wt{G} / N$. Let $0\leq r\leq \aleph_0$. If there is a tychomorphism from the free group $F_r$ to $G$, then there is one from $F_r$ to $\wt{G}$.
\end{prop}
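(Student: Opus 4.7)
The plan is to lift the cocycle associated with the given tychomorphism via Remark~\ref{rem:cocycle} and then to repackage it as a tychomorphism to $\tilde G$. I would begin with a tychomorphism $(\Sigma, m)$ from $F_r$ to $G$ together with an isomorphism $(\Sigma, m) \cong (G, \widecheck m_G) \times (X, \teta)$ realising the finite amplification for $G$, so that the commuting $F_r$-action reads $\gamma \cdot (g, x) = (g\, \alpha(\gamma, x)\inv,\, \gamma x)$ for some measurable cocycle $\alpha\colon F_r \times X \to G$ and some measure-preserving $F_r$-action on $(X, \teta)$.

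The key step is to lift $\alpha$ to a cocycle with values in $\tilde G$. The quotient map $\pi\colon \tilde G \to G$ is a continuous surjection of standard Borel spaces with Polish (closed-coset) fibres, hence admits a Borel section $s\colon G \to \tilde G$ by standard selection theorems (e.g.\ Kuratowski--Ryll-Nardzewski). Here the freeness of $F_r$ is exploited in full: choosing $\tilde\alpha_i := s \circ \alpha(\gamma_i, \cdot)\colon X \to \tilde G$ on each free generator $\gamma_i$, there is a unique measurable cocycle $\tilde\alpha\colon F_r \times X \to \tilde G$ over the same $F_r$-action on $X$ extending these choices, obtained by iterating the cocycle identity (and its inverse) along reduced words. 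Measurability is preserved at every step, and by construction $\pi \circ \tilde\alpha = \alpha$.

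Then I would set $\tilde\Sigma := \tilde G \times X$ with measure $\widecheck m_{\tilde G} \otimes \teta$, declare $\tilde G$ to act by left translation on the first factor, and define the $F_r$-action by $\gamma \cdot (\tilde g, x) = (\tilde g\, \tilde\alpha(\gamma, x)\inv,\, \gamma x)$. These two actions commute, and both preserve the measure (for the $F_r$-action, because right translation preserves $\widecheck m_{\tilde G}$ and $F_r$ preserves $\teta$). By construction $\tilde\Sigma$ is a finite amplification of $(\tilde G, \widecheck m_{\tilde G})$. For the $F_r$-side I would use the projection $\Pi \colon \tilde\Sigma \to \Sigma$, $(\tilde g, x) \mapsto (\pi(\tilde g), x)$, which is $F_r$-equivariant: essential freeness descends from $\Sigma$ to $\tilde\Sigma$, and the $\Pi$-preimage of any measurable fundamental domain for $F_r$ on $\Sigma$ (provided by Example~\ref{ex:count}, since $\Sigma$ is an amplification of $F_r$) is again a measurable fundamental domain for $F_r$ on $\tilde\Sigma$. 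Example~\ref{ex:count} applied in the other direction identifies $\tilde\Sigma$ as an amplification of $(F_r, m_{F_r})$, completing the tychomorphism.

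The hard part is really the cocycle lift in the second paragraph, and it works precisely because $F_r$ is free: for a group with non-trivial relations one would face a measurable lifting problem obstructed, in spirit, by the cohomology of the extension $N \to \tilde G \to G$, with no reason to expect vanishing in general. Everything else is bookkeeping about amplifications, Borel sections for closed-subgroup quotients, and the invariance of $\widecheck m_{\tilde G}$ under right translation.
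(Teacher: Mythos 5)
Your proposal is correct and follows essentially the same route as the paper: lift the cocycle through a Borel section by propagating its values on the free generators along reduced words (this is exactly where freeness of $F_r$ is used in the paper as well), then equip $\wt{G}\times X$ with the lifted twisted action and exhibit a fundamental domain over the one given by Example~\ref{ex:count}. The only (cosmetic) difference is in the last step: you verify the fundamental domain via the $F_r$-equivariant projection $\wt\Sigma\to\Sigma$, whereas the paper makes the same set explicit as $N\times\sF$ through the measure isomorphism $N\times G\times X\simeq\wt{G}\times X$ furnished by Weil's integration formula.
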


\begin{proof}
 Let $S$ be a basis of a free group $F_r$ and $\Sigma \cong G \times X$ be a tychomorphism from $F_r$ to $G$ with associated cocycle $\alpha\colon F_r \times X \to G$. By Example~\ref{ex:count}, there is a fundamental domain $\sF \subset \Sigma$ for the $F_r$-action.

Choose a Borel section $\tau\colon G \to \wt{G}$ of the projection map $\pi\colon \wt{G} \to G$. The following lemma can be checked e.g.\ by applying the cocycle relation to the unique representation of elements of $F_r$ as reduced $S$-words.

\begin{lem}
There is a measurable cocycle $\wt{\alpha}\colon F_r \times X \to G$ such that $\alpha = \pi\circ\wt\alpha$. Moreover, there is an essentially unique such $\wt{\alpha}$ satisfying $\wt{\alpha}(s, x) = \tau (\alpha(s, x))$ for almost all $x \in X$ and all $s \in S$.\qed
\end{lem}

\noindent
(Note that $\wt{\alpha}$ is not in general equal to $\tau \circ \alpha$.)

\smallskip
Consider now the finite amplification of $\wt{G}$ given by $\wt{\Sigma} = (\wt{G}, \widecheck{m}_{\wt{G}}) \times X$. We endow $\wt{\Sigma}$ with a measure-preserving $F_r$-action that commutes with the $\wt{G}$-action by setting $w (\wt{g}, x) = (\wt{g} \wt{\alpha}(w, x)\inv, wx)$, observing that the given $F_r$-action on $X$ preserves the measure by Remark~\ref{rem:cocycle}.

 We now need to prove that $\wt{\Sigma}$ is an amplification of $F_r$, i.e.\ that there is a fundamental domain in $\wt{\Sigma}$ for the countable group $F_r$. For this, consider the measurable space isomorphism $N \times G \times X \simeq \wt{G} \times X$ given by $(n, g, x) \mapsto (\tau(g)n, x)$; when each group is endowed with a right Haar measure, this isomorphism preserves the measure, up to a scaling factor, thanks to Weil's integration formula (see Proposition~10 in~\cite[VII \S 2 \No 7]{NBourbakiINT78}). By transferring the $F_r$-action via this isomorphism, the action on the former space is given by
$$w (n, g, x) = (n', g \alpha(w, x)\inv, wx) \kern5mm \text{for $w \in F_r$, $n \in N$, $g \in G$, $x \in X$,}$$
where the specific expression $n'=\tau(g\alpha(w, x)\inv)\inv \tau(g) n \wt{\alpha}(w, x)\inv$ is irrelevant for our purpose. Indeed, we only need to observe that the above $F_r$-action on $\wt{\Sigma}$ is a twisted product with $N$ of the given action on $\Sigma$, namely $w (g, x) = (g \alpha(w, x)\inv, wx)$. It therefore admits $N \times \sF$ as a fundamental domain.
\end{proof}

\section{A generalization of the Gaboriau--Lyons theorem}\label{sec:GL}
In this section, we establish a generalization of the main result of~\cite{Gaboriau-Lyons} to certain totally disconnected groups. We then deduce the existence of tychomorphisms from free groups to these groups, Theorem~\ref{thm:tychoGL} below. The full generality of Theorem~\ref{thm:tycho} will be established in Section~\ref{sec:main}.

\medskip
Recall that the \emph{core} of a subgroup $K<G$ is the normal subgroup $\Core_G(K)=\bigcap_{g\in G} K^g$ of $G$; thus it is the kernel of the $G$-action on $G/K$.

\begin{thm}\label{thm:tychoGL}
Let $G$ be a non-amenable unimodular compactly generated locally compact second countable group and $K<G$ a compact open subgroup.

Then there exists a tychomorphism from $F_2$ to $G/\Core_G(K)$.
\end{thm}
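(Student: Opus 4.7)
The plan is to adapt the percolation argument of Gaboriau and Lyons to the setting of a locally compact group acting on its Cayley-Abels graph. First I would pass to the quotient $\bar{G} = G/\Core_G(K)$. Since $\Core_G(K) \subseteq K$ is compact (and hence amenable), $\bar{G}$ inherits from $G$ the properties of being non-amenable, unimodular, compactly generated, totally disconnected, and second countable; moreover $K' := K/\Core_G(K)$ is a compact open subgroup of $\bar{G}$ with trivial core, and $\bar{G}$ acts faithfully and vertex-transitively on the countable set $V = G/K = \bar{G}/K'$. Choosing a compact symmetric generating set $S = K'SK'$ containing the identity, I would form a Cayley-Abels graph $\gr^n(\bar{G}, K', S)$ for $n$ to be chosen later. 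This is a locally finite, connected, vertex-transitive graph; non-amenability of $\bar{G}$ together with compactness of the vertex stabilizers forces the graph itself to have positive Cheeger constant, while unimodularity of $\bar{G}$ translates into graph-theoretic unimodularity (so the mass-transport principle holds).

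The heart of the proof is Bernoulli bond percolation on this graph. The framework of Benjamini--Lyons--Peres--Schramm applies to non-amenable unimodular transitive graphs; by choosing $n$ large enough one may force $p_c < p_u$, so that a parameter $p$ in the non-uniqueness phase yields a $\bar{G}$-invariant probability measure $P_p$ on $\Omega = \{0,1\}^{E(\gr^n)}$ whose infinite clusters are multiple and non-amenable. The original Gaboriau--Lyons argument can then be rerun on $(\Omega, P_p)$: its probabilistic inputs---insertion-tolerance of the Bernoulli measure, the Lyons--Schramm cluster indistinguishability theorem, and non-amenability of the cluster equivalence relation---are graph-theoretic and carry over without change. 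The output is a free, ergodic, probability-measure-preserving action $F_2 \acts (Y, \nu)$ on a standard space of finite measure, together with a measurable cocycle $\alpha\colon F_2 \times Y \to \bar{G}$, such that each $F_2$-orbit sits inside a $\bar{G}$-orbit in the rooted percolation space (a cross-section of $\Omega$ obtained from the compact open subgroup $K'$).

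Finally I would package this data into a tychomorphism. Set $\Sigma = (\bar{G}, \widecheck{m}_{\bar{G}}) \times Y$ with $\bar{G}$ acting by left translation on the first factor and $F_2$ acting by the commuting rule
$$w \cdot (g, y) = \big(g \, \alpha(w, y)\inv, \, wy\big),$$
as in Remark~\ref{rem:cocycle}. Since $Y$ carries a finite measure, $\Sigma$ is a finite amplification of $(\bar{G}, \widecheck{m}_{\bar{G}})$ as a $\bar{G}$-space; and since the $F_2$-action on $Y$ is essentially free, any measurable $F_2$-fundamental domain $\sF \subseteq Y$ lifts to a fundamental domain for $F_2$ on $\Sigma$, which makes $\Sigma$ an amplification of $(F_2, m_{F_2})$ in the sense of Example~\ref{ex:count}. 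The main obstacle is clearly the middle step: one must verify that the percolation-theoretic machinery designed for Cayley graphs of discrete groups extends to Cayley-Abels graphs of tdlc groups (where the vertex set is a coset space rather than the group itself), and that a sufficiently high power $\gr^n$ can be chosen to guarantee a usable non-uniqueness regime. This is precisely where compact generation and unimodularity of $G$ play their crucial role, and it is this generalization of the Gaboriau--Lyons theorem that the paper announces as its own independent result.
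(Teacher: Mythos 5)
Your overall strategy coincides with the paper's: pass to the trivial-core quotient, power up a Cayley--Abels graph $\gr^n(\bar G, K', S)$ until Bernoulli percolation has a usable non-uniqueness phase, run the Gaboriau--Lyons/Lyons--Schramm/Hjorth machinery on a $K'$-cross-section to obtain a free ergodic measure-preserving action $F_2\acts (Y,\nu)$ whose orbits lie inside the $\bar G$-orbits, and assemble the coupling $\Sigma\cong(\bar G,\widecheck{m}_{\bar G})\times Y$ with the cocycle action. Up to the last step the plan is sound, modulo details the paper supplies (it realizes freeness of the $\bar G$-action by working with injective maps in $[0,1]^E$ rather than with $\{0,1\}^E$, and it gets $p_\mathrm{c}<p_\mathrm{u}$ by forcing the spectral radius of $\gr^n$ below $1/9$ and comparing the free and wired minimal spanning forests).

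The gap is in your final packaging, and it sits exactly at the point that distinguishes a tychomorphism from a mere orbit subrelation. You assert that ``any measurable $F_2$-fundamental domain $\sF\se Y$ lifts to a fundamental domain for $F_2$ on $\Sigma$.'' No such $\sF$ exists: for a probability-measure-preserving action of the infinite group $F_2$ on the finite-measure space $(Y,\nu)$, a measurable fundamental domain would force $\nu(Y)=\sum_{w\in F_2}\nu(w\sF)=\sum_{w\in F_2}\nu(\sF)$, which is $0$ or $\infty$ and never equals $\nu(Y)>0$. So the premise is vacuous, and the condition that $\Sigma$ be an amplification of $F_2$ --- the entire content of ``tychomorphism,'' as opposed to the useless statement that $F_2$-orbits sit inside $\bar G$-orbits --- is left unverified. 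The correct fundamental domain is transverse rather than lifted from $Y$: identify $\Sigma$ with the orbit equivalence relation $\sQ$ of $\bar G\acts X$, so that $F_2$ acts on the second coordinate only; restrict to the countable Borel equivalence relation $\sR$ induced on the cross-section $Y$, whose classes $[y]_\sR\cong \bar G/K'$ are countable precisely because $K'$ is open; and choose in a Borel way one representative of each $F_2$-orbit inside each $\sR$-class. The resulting set $Z\se\sR$, thickened by the fully transitive relation on $K'$, is a positive-measure fundamental domain for $F_2$ on $\sQ$. This is where the openness of $K$ --- the discreteness your write-up invokes in spirit but never actually uses at this stage --- enters the argument; fixing your proof requires inserting exactly this step.
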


When $G$ is discrete, the assumptions are simply that $G$ is finitely generated and non-amenable (one takes $K$ trivial). In that case, the Gaboriau--Lyons theorem states that a suitable Bernoulli shift of $G$ contains the orbits of a free $F_2$-action. This implies that there is a ``randembedding'' from $F_2$ to $G$ (see~\cite[\S~5]{MonodICM} or ~\cite{Monod-Ozawa2009}). We shall follow the strategy of Gaboriau--Lyons closely until changes are imposed by the non-discreteness.

\medskip

One difficulty mentioned in the introduction is that we need a stronger conclusion since simply having $F_2$-orbits within the orbits of a locally compact group $G$ does not correspond to any form of non-amenability unless some discreteness condition is imposed.

\begin{rem*}
  Two proofs are proposed in~\cite{Gaboriau-Lyons}, each needing different adjustments to be generalized; therefore, we shall give all the details of the approach taken below. Another exposition of~\cite{Gaboriau-Lyons} is given in~\cite{Houdayer_bourbaki}; there, the need of ergodicity for Hjorth's theorem in~\cite{Hjorth_attained} relies on indistinguishable clusters for the free minimal spanning forest (Conjecture~6.11 in~\cite{Lyons-Peres-Schramm06}), which is bypassed using a result~\cite{Chifan-Ioana} not established in the non-discrete setting. After circulating a first version of this article, we were informed by Itai Benjamini that indistinguishability has just been established in two preprints~\cite{Hutchcroft-Nachmias_arx, Timar_forest_arx}. In conclusion, a second approach becomes possible just as in~\cite{Gaboriau-Lyons} and~\cite{Houdayer_bourbaki}.
\end{rem*}

\begin{proof}[Proof of Theorem~\ref{thm:tychoGL}]
All the assumptions are preserved if we replace $G$ and $K$ by their images in $G/\Core_G(K)$; therefore we can simply assume that $K$ has trivial core in $G$. We can chose a compact generating set $S\se G$ with $S=S\inv$, $e\in S$ and $S=KSK$. Let $n$ be a positive integer to be chosen shortly and consider the graph $\gr:=\gr^n(G,K,S)$ defined in Section~\ref{sec:lengths-measures}. Then $G$ is a vertex-transitive closed subgroup of the automorphism group of $\gr$ and in particular $\gr$ is, by definition, a unimodular graph. By construction, the spectral radius of $\gr$ is $\ro^n$ for some $0<\ro\leq 1$ which is the spectral radius of $\gr(G,K,S)$. We recall here that $\ro<1$ because $G$ is non-amenable, see~\cite[Thm.~1(c)]{Soardi-Woess}. Therefore, we can and do choose $n$ large enough so that the spectral radius of $\gr$ is~$\ro^n<1/9$. We denote by $E$ the set of edges of $\gr$ and consider the compact metrizable $G$-space $[0,1]^E$. We define $X\se [0,1]^E$ to be the $G$-invariant $\mathrm{G}_\delta$-subset of injective maps $E\to[0,1]$ and observe that $G$ acts freely on $X$ since it acts faithfully on $E$ by triviality of the core. Let $\sQ$ be the corresponding orbit equivalence relation on $X$ defined by $x\sQ x'$ iff $x'\in G x$.

Since $K$ is compact, its action on $X$ has a Borel fundamental domain $Y\se X$ (see e.g.~\cite[5.4.3]{Srivastava}). Thus, the action map defines a Borel isomorphism $K\times Y \cong KY=X$. We denote by $\sR$ the equivalence relation on $Y$ defined by restricting $\sQ$, i.e.\ $y\sR y'$ iff $y'\in Gy$. Then $\sQ$ decomposes along $K\times Y \cong X$ as the product $\sQ=\sT_K\times \sR$ of the fully transitive relation $\sT_K$ on $K$ with $\sR$ on $Y$. In particular $\sR$ is a Borel equivalence relation with countable classes. We obtain a \emph{graphing} of $\sR$ (in the sense of~\cite{Gaboriau00,Levitt95}) by transporting to the $\sR$-orbit $[y]_\sR$ of $y\in Y$ the graph $\gr$. More explicitly, we have a bijection $G/K\to [y]_\sR$ mapping a vertex $gK$ of $\gr$ to the unique element of the set $Kg\inv y \cap Y$. Notice that this graph structure on $[y]_\sR$ depends indeed only on the $\sR$-class of $y$. By construction, each $\sR$-class is then isomorphic as a graph to the connected graph $\gr$.

For a given parameter $0<p<1$, consider the map from $X$ to the space of subgraphs of $\gr$ given as follows: an edge $a\in E$ is kept at $x\in X$ iff $x(a)\leq p$. This provides us with the \emph{cluster equivalence subrelation} $\sQ\cl\se\sQ$ on $X$ defined as in~\cite{Gaboriau05} by declaring $x\sQ\cl x'$ iff $x'=g\inv x$ and the subgraph $x\leq p$ connects $gK$ to $eK$. Let $\sR\cl \se \sR$ be the restriction of $\sQ\cl$ to $Y$; one checks again that $\sQ\cl$ is the product $\sT_K\times \sR\cl$.

We now endow $X$ with the (restriction of the) product $\teta$ of Lebesgue measures on $[0,1]^E$. Then $\teta$ is preserved and ergodic under $G$; ergodicity is very classical and can be proved e.g.\ by the same argument as in~\cite[2.1]{Kechris-Tsankov}. We consider the maps described above by $x\leq p$ as a random variable on $(X,\teta)$ with values in the space of subgraphs of $\gr$; this is a $G$-invariant $p$-Bernoulli bond percolation on $\gr$ with scenery in the sense of~\cite[3.4]{Lyons-Schramm}. Therefore, we can apply the results of~\cite{Lyons-Schramm} stating that this percolation process has indistinguishable infinite clusters in the sense of~\cite[3.1]{Lyons-Schramm}.

At this point we record the fact that there exist choices of the parameter $p$ such that the corresponding random subgraphs of $\gr$ have $\teta$-almost surely infinitely many infinite clusters, each of which having uncountably many ends. Indeed, in view of Theorems~1.2 and~6.1 in~\cite{Haggstrom-Peres}, it suffices to show that the critical probability $p_\mathrm{c}$ for $\gr$ is strictly below the critical uniqueness probability $p_\mathrm{u}$ (see also~\cite[3.10]{Lyons-Schramm} for a proof of~\cite[6.1]{Haggstrom-Peres}). The latter condition follows if the free, respectively wired minimal forests are distinct processes on $\gr$, see~\cite[3.6]{Lyons-Peres-Schramm06}. That property holds for our choice of $\gr$; indeed, the free minimal forest has expected degree~$>2$ by~\cite[Thm.~1]{Thom_degree} and the fact that the spectral radius of $\gr$ is~$<1/9$; on the other hand the wired minimal forest has degree~$2$ by~\cite[3.12]{Lyons-Peres-Schramm06}. Alternatively, one can apply the earlier~\cite{Pak-Smirnova-Nagnibeda} instead of~\cite{Thom_degree}.

Let $X_\infty\se X$ be the set of points whose $\sQ\cl$-class is infinite; this is non-null by the above discussion. The proof of Proposition~5 in~\cite{Gaboriau-Lyons} applies in this setting and shows that, by the indistinguishability established above, the restriction of $\sQ\cl$ to $X_\infty$ is ergodic. If we set $Y_\infty = Y\cap X_\infty$, we have $X_\infty = K Y_\infty$. Furthermore, by measure disintegration, there is a unique Borel probability measure $\eta$ on $Y$ such that $\teta$ is the product of $\eta$ by the normalized Haar measure of $K$; moreover, $\sR$ preserves $\eta$. It follows that $Y_\infty$ is non-null for $\eta$ and that the restriction of $\sR\cl$ to  $Y_\infty$ is $\eta$-ergodic in view of the decomposition $\sQ\cl\cong \sT_K\times \sR\cl$. This further shows that $\sR\cl$ has $\eta$-almost surely infinitely many infinite clusters, each of which with uncountably many ends.

At this point we can argue exactly as in Propositions~12, 13 and~14 of~\cite{Gaboriau-Lyons} and apply Hjorth's result~\cite{Hjorth_attained} to deduce that $\sR$ contains a subrelation which is produced by a measure-preserving a.s.\ free (ergodic) action of the free group $F_2$ on two generators upon the space $(Y,\eta)$.

\medskip

As usual in this setting, we endow $\sR$ with the measure given by integrating over $(Y,\eta)$ the counting measure of each equivalence class. We thus obtain a $\sigma$-finite measure on $\sR$ which is preserved by the $F_2$-action on the second coordinate of $\sR\se Y\times Y$ (the action on the first coordinate would work just as well). Moreover, there exists a positive measure fundamental domain $Z\se \sR$ for this $F_2$-action, obtained by choosing for each $y\in Y$ in a Borel way representatives for the $F_2$-classes within the $\sR$-class of $y$. (This procedure is described in detail e.g.\ in~\cite[2.2.2]{Epstein_PhD} in the case where $\sR$ comes from a group action, which is not a restriction~\cite{Feldman-MooreI}.)

We extend the $F_2$-action on $Y$ to a (non-ergodic) $F_2$-action on $X\cong K\times Y$ by letting $F_2$ act trivially on $K$. The resulting relation is contained in $\sQ$ and therefore we have a $F_2$-action on the second coordinate of $\sQ$. This action admits
$$\sT_K\times Z = \big\{(ky, k'y') : k, k'\in K \text{ and } (y,y')\in Z\big\}\ \se\ \sQ$$
as a fundamental domain. We endow $\sQ$ with a $G$-invariant measure $m$ by pushing forward the measure $m_G\times \teta$ under the identification
$$ G\times X \lra \sQ, \kern5mm (g,x)\longmapsto (gx,x),$$
with $m_G$ being a Haar measure on $G$. Then $(\sQ, m)$ is a finite amplification of the $G$-space $(G, m_G)$. Moreover, the $F_2$-action preserves $m$ because $G$ is unimodular; thus, since we have found a non-$m$-null fundamental domain $\sT_K\times Z$ for $F_2$, the $F_2$-space $(\sQ, m)$ is an amplification of $F_2$. Therefore, $(\sQ, m)$ endowed with the commuting $G$-\ and $F_2$-actions is indeed a tychomorphism from $F_2$ to $G$.
\end{proof}

\section{Moderate induction}\label{sec:induction}

Using moderate measures, we propose a new variant of the classical induction of representations and of cocycles from closed subgroups. In fact, we shall consider the more general case of tychomorphisms instead of subgroups only.

\medskip
We first need to clarify the meaning of continuity for representations. Consider thus a linear representation $\ro\colon G\to \GL(V)$ of a topological group $G$ on a Banach space $V$, where $\GL$ denotes the group of invertible continuous linear maps. It is well-known that several possible definitions of continuity coincide for unitary or even uniformly bounded representations (see for instance Lemma~2.4 in~\cite{Bader-Furman-Gelander-Monod}). But for general representations $\ro$ as above, even the definitions are already trickier because neither the weak nor strong operator topologies are compatible with the group structure of $\GL(V)$ (compare e.g.~\cite[I\,\S 3.1]{Dixmier6996}).

\medskip
The following lemma  (which slightly refines~\cite[\S 1.3]{Rosendal13}) shows that we can nonetheless unambiguously talk about a \emph{continuous representation} when $G$ is Baire (e.g.\ locally compact) or first-countable. We call $\ro$ \emph{locally bounded} if the operator norm of $\ro$ is a locally bounded function on $G$. 

\begin{lem}\label{lem:cont}
 Let $G$ be a topological group and $\ro\colon G \to \GL(V)$ be a representation on a Banach space $V$. Consider the following assertions:
\begin{enumerate}[label=(\roman*)]
 \item The action map $G \times V \to V$ is jointly continuous.\label{pt:cont:joint}
 \item The orbit map $G \to V\colon g \to \ro(g)x$ is continuous for every $x \in V$ and $\ro$ is locally bounded.\label{pt:cont:sep-bd}
 \item The orbit map $G \to V\colon g \to \ro(g)x$ is continuous for every $x \in V$.\label{pt:cont:sep}
\end{enumerate}
Then~\ref{pt:cont:joint} and~\ref{pt:cont:sep-bd} are equivalent. Moreover, they are also equivalent to~\ref{pt:cont:sep} if $G$ is Baire or admits countable neighbourhood bases.
\end{lem}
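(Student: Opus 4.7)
My plan is to treat the three equivalences in turn, the main content being that separate continuity upgrades to local boundedness under either hypothesis.

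\textbf{Step 1: (i) $\Leftrightarrow$ (ii).} That (i) yields continuity of each orbit map is immediate. For local boundedness, apply joint continuity at $(g_0, 0)$: there is a neighbourhood $U$ of $g_0$ and an $\ep > 0$ such that $\ro(g)B_\ep(0) \se B_1(0)$ for all $g\in U$, giving $\|\ro(g)\|\leq \ep\inv$ on $U$. Conversely, decompose
$$\ro(g)x - \ro(g_0)x_0 = \ro(g)(x-x_0) + \bigl(\ro(g)x_0 - \ro(g_0)x_0\bigr).$$
The first summand is bounded by $\|\ro(g)\|\cdot\|x-x_0\|$ and vanishes as $(g,x)\to(g_0,x_0)$ since $\|\ro\|$ is locally bounded; the second vanishes by continuity of the orbit map of $x_0$.

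\textbf{Step 2: (iii) $\Rightarrow$ (ii) when $G$ is Baire.} Only local boundedness needs to be proved. For each positive integer $n$, observe that
$$F_n := \{g\in G : \|\ro(g)\|\leq n\} = \bigcap_{\|x\|\leq 1}\bigl\{g\in G : \|\ro(g)x\|\leq n\bigr\}.$$
By (iii) each set in the intersection is closed, so $F_n$ is closed; and $G=\bigcup_n F_n$ since every $\ro(g)$ is a bounded operator. By Baire's theorem some $F_n$ has non-empty interior, so there exist $g_1\in G$ and an open neighbourhood $W$ of the identity with $g_1 W \se F_n$. Then $\|\ro(w)\|=\|\ro(g_1)\inv \ro(g_1 w)\|\leq n\,\|\ro(g_1)\inv\|$ for $w\in W$, and hence $\|\ro\|$ is bounded on the neighbourhood $g_0 W$ of every $g_0\in G$.

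\textbf{Step 3: (iii) $\Rightarrow$ (ii) when $G$ admits countable neighbourhood bases.} Argue by contradiction. If $\ro$ is not locally bounded at some $g_0$, first-countability furnishes a sequence $g_n\to g_0$ with $\|\ro(g_n)\|\to\infty$. Writing $\ro(g_n) = \ro(g_n g_0\inv)\ro(g_0)$, we may reduce to $g_0 = e$, so $g_n\to e$ and $\|\ro(g_n)\|\to\infty$. But (iii) yields $\ro(g_n)x\to x$ for every $x\in V$, whence $\sup_n\|\ro(g_n)x\|<\infty$ pointwise. The Banach--Steinhaus theorem, applicable since $V$ is complete, then gives $\sup_n\|\ro(g_n)\|<\infty$, a contradiction.

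The substantive step is the passage from (iii) to local boundedness; the only real obstacle is routing around the fact that $\GL(V)$ is not a topological group for the strong or weak operator topology, which is why the Baire or first-countability hypothesis must be invoked to apply some form of the uniform boundedness principle. The remaining implications are formal manipulations involving left translations in $G$ and the algebraic structure of $\GL(V)$.
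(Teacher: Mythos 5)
Your proposal is correct and follows essentially the same route as the paper: the same norm decomposition for (ii)$\Rightarrow$(i), the same Baire-category argument on the closed sets $\{g:\|\ro(g)\|\leq n\}$ followed by translation, and the same appeal to Banach--Steinhaus along a convergent sequence in the first-countable case (which you phrase contrapositively, and where you also spell out the easy implication from joint continuity to local boundedness that the paper leaves as trivial). No gaps.
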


It turns out that local boundedness is not automatic for completely general $G$, even if $V$ is a separable Hilbert space; see Section~\ref{sec:rems}. In particular, orbital and joint continuity do not agree in full generality.

\begin{proof}[Proof of Lemma~\ref{lem:cont}]
The implications \ref{pt:cont:joint}$\Rightarrow$\ref{pt:cont:sep-bd}$\Rightarrow$\ref{pt:cont:sep} hold trivially. Notice moreover that $\ro$ is locally bounded if and only if for every net $g_\alpha \to g$ in $G$, the net $\lVert \ro(g_\alpha) \rVert$ is eventually bounded. 

For \ref{pt:cont:sep-bd}$\Rightarrow$\ref{pt:cont:joint}, let $(g_\alpha, x_\alpha)$ be a net converging to $(g, x)$ in $G\times V$ and consider
\begin{equation*}
 \lVert \ro(g_\alpha) x_\alpha - \ro(g) x \rVert \leq \lVert \ro(g_\alpha) \rVert\,\lVert x_\alpha - x \rVert + \lVert \ro(g_\alpha) x - x \rVert.
\end{equation*}
The first term goes to zero as $\ro$ is locally bounded and the second one also converges to zero by orbital continuity.

\medskip
We thus have to prove that orbital continuity implies local boundedness when $G$ is either Baire or first countable.

In the first case, we write $G$ as the increasing union of the subsets $G_n = \{g\in G : \lVert \ro(g) \rVert \leq n \}$. Since $G_n$ can be written as the intersection over all $x\in V$ of all $g$ satisfying $\lVert \ro(g) x \rVert \leq n\|x\|$, it is a closed set by orbital continuity. The Baire condition then implies that $G_n$ has some interior point $g$ when $n$ is large enough. Now any $h\in G$ admits $G_n g \inv h$ as a neighbourhood on which $\lVert \ro \rVert$ is bounded by $n \lVert \ro(g\inv h) \rVert$.

If on the other hand $G$ is first countable, then local boundedness can be checked on sequences instead of nets; moreover, for sequences, eventual and actual boundedness coincide. We thus show that for every sequence $g_n \to g$ in $G$, the sequence $\lVert \ro(g_n) \rVert$ is bounded. By orbital continuity, $\ro(g_n) x$ converges and hence is bounded for all $x\in V$. The conclusion now follows from the Banach--Steinhaus uniform boundedness principle~\cite[II.3.21]{Dunford-Schwartz_I}.
\end{proof}

\begin{thm}\label{thm:inductiontycho}
Let $G$ and $H$ be locally compact second countable groups with a tychomorphism from $H$ to $G$. Assume that $G$ carries a moderate measure.

If $H$ admits a continuous affine fixed point free action on a separable Hilbert space with a bounded orbit, then so does $G$.
\end{thm}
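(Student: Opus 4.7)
The plan is to perform an affine induction of the $H$-action to $G$ via the tychomorphism $(\Sigma,m)$, using the moderate measure $\mu_G$ in place of the right Haar measure so that the induced linear representation becomes locally bounded. Write $(\pi,b)$ for the given affine action, with $\pi\colon H\to\GL(\mathcal{H})$ the linear part on the separable Hilbert space $\mathcal{H}$ and $b\colon H\to\mathcal{H}$ the affine cocycle; by assumption $b$ is bounded and not a $\pi$-coboundary. Normalize the finite measure $\teta$ on $X$ to be a probability, and use the $G$-equivariant isomorphism $\Sigma\cong(G,\widecheck{m}_G)\times(X,\teta)$ to define a probability measure $\widetilde m$ on $\Sigma$ by substituting $\mu_G$ for $\widecheck{m}_G$. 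Then $\widetilde m$ is equivalent to $m$, and the Radon--Nikod\'ym derivatives $\mathrm{d}(g\widetilde m)/\mathrm{d}\widetilde m$ are essentially bounded on $\Sigma$, locally bounded in $g\in G$, by moderateness of $\mu_G$.

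Let $\mathcal{K}$ be the closed subspace of $L^2(\Sigma,\widetilde m;\mathcal{H})$ consisting of $\pi$-equivariant sections, i.e.\ measurable $F$ with $F(hs)=\pi(h)F(s)$ for every $h\in H$ and almost every $s$. As a closed subspace of a separable $L^2$-space, $\mathcal{K}$ is itself a separable Hilbert space. Define the representation $\Pi\colon G\to\GL(\mathcal{K})$ by $(\Pi(g)F)(s)=F(g^{-1}s)$: commutation of the $G$- and $H$-actions on $\Sigma$ preserves $\pi$-equivariance, and the change of variables gives $\|\Pi(g)\|^2\le\|\mathrm{d}(g^{-1}\widetilde m)/\mathrm{d}\widetilde m\|_\infty$, which is locally bounded in $g$. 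Orbital continuity is a standard density argument (reducing via the product structure $\Sigma\cong G\times X$ to continuity of left translation on $L^2(G,\mu_G)$), and Lemma~\ref{lem:cont} then yields joint continuity of $\Pi$.

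For the cocycle, I use the $H$-equivariant decomposition $\Sigma\cong(H,m_H)\times Z$ to define $\sigma_0\colon\Sigma\to\mathcal{H}$ by $\sigma_0(h,z)=b(h)$. The cocycle identity for $b$ gives $\sigma_0(hs)=\pi(h)\sigma_0(s)+b(h)$, and $\|\sigma_0\|_\infty\le\sup_H\|b\|<\infty$, so $\sigma_0\in L^2(\Sigma,\widetilde m;\mathcal{H})$ (but not in $\mathcal{K}$). Set $B(g):=\Pi(g)\sigma_0-\sigma_0$; the identity
$$B(g)(hs)=\sigma_0(hg^{-1}s)-\sigma_0(hs)=\pi(h)\bigl(\sigma_0(g^{-1}s)-\sigma_0(s)\bigr)=\pi(h)B(g)(s),$$
obtained by commuting the $G$- and $H$-actions on $\Sigma$ and applying the affine twisting of $\sigma_0$, shows $B(g)\in\mathcal{K}$. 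Then $B$ is a $\Pi$-cocycle by telescoping, uniformly bounded by $2\sup\|b\|$, and continuous because $g\mapsto\Pi(g)\sigma_0$ is continuous in the ambient $L^2$.

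The delicate step, which I expect to be the main obstacle, is non-triviality of $B$ in cohomology. Suppose $B(g)=\Pi(g)v-v$ for some $v\in\mathcal{K}$; then $\phi:=\sigma_0-v\in L^2(\Sigma,\widetilde m;\mathcal{H})$ is $G$-invariant. Via $\Sigma\cong G\times X$, a $G$-invariant element depends only on the $X$-coordinate, giving $\phi\in L^2(X,\teta;\mathcal{H})\subset L^1(X,\teta;\mathcal{H})$ since $\teta$ is a probability. Combining the $\pi$-equivariance of $v$ with the affine identity for $\sigma_0$ yields $\phi(hx)=\pi(h)\phi(x)+b(h)$ for the $H$-action on $X$ described in Remark~\ref{rem:cocycle}, which crucially preserves $\teta$. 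Setting $v_0:=\int_X\phi\,\mathrm{d}\teta\in\mathcal{H}$ and integrating the previous identity produces $v_0=\pi(h)v_0+b(h)$, i.e.\ $b(h)=v_0-\pi(h)v_0$, contradicting the hypothesis that $b$ is not a $\pi$-coboundary. Thus the affine $G$-action $v\mapsto\Pi(g)v+B(g)$ on the separable Hilbert space $\mathcal{K}$ is continuous, fixed-point free, and has bounded orbit at the origin, as required.
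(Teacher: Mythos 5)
Your proposal is correct and follows essentially the same route as the paper: both induce via $L^2(\Sigma,\widetilde m;\mathcal{H})$ with the measure twisted by the moderate measure, obtain the bounded orbit from $b\circ\chi$, and derive fixed-point-freeness by descending a putative invariant section to $L^2(X,\teta;\mathcal{H})$ and averaging over the $H$-invariant probability $\teta$; your linear subspace $\mathcal{K}$ of $\pi$-equivariant sections together with the explicit cocycle $B$ is just the translate by $\sigma_0$ of the paper's affine set $W$ of $\alpha$-equivariant maps. The only place where you are noticeably lighter than the paper is the continuity of $\Pi$, for which the paper spends a careful paragraph reducing to norm-measurability of orbit maps for the regular representation on $L^2(\Sigma,m;\mathcal{H})$.
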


\begin{proof}
Let $V$ be a separable Hilbert space with a continuous affine action $\alpha$ of $H$ having a bounded orbit. Upon conjugating by a translation, we can assume that the orbit of the origin is bounded, i.e.\ that the cocycle $b\colon G\to V$ of $\alpha$ is bounded. Let $(\Sigma, m)$ be a tychomorphism from $H$ to $G$ realized as a finite amplification $(G, \widecheck{m}_G) \times (X, \teta)$ and denote by $\chi$ an $H$-equivariant retraction $\Sigma \to H$ (i.e.\ $\chi$ is the projection onto $H$, when $\Sigma$ is seen as an amplification of $H$).

First, let us twist the measure $m$ by a moderate measure $\mu$ on $G$, i.e.\ define a new measure $\wt{\mu}$ by pushing the product measure on $(G, \mu) \times (X, \teta)$ to $\Sigma$ through the isomorphism given by the tychomorphism. Thus $\wt{\mu}$ is a probability measure on $\Sigma$ in the same class as $m$.

We define an action of $G \times H$ on functions $f\colon \Sigma \to V$ by
\begin{equation*}
 (g f ) (s) = f(g\inv s), \qquad \qquad (h f ) (s) = f(h\inv s).
\end{equation*}
As $G \times H$ preserves the class of the measures $m$ and $\wt\mu$ on $\Sigma$, this action is also well defined on the equivalence classes of measurable functions.

Consider now the Hilbert space $\sH = L^2 (\Sigma, \wt{\mu}; V)$. The fact that $\mu$ is moderate implies that $G$ preserves $\sH$. Let us check that this action is continuous, i.e.\ that the orbit maps $G \to \sH$ are continuous (cf.\ Lemma~\ref{lem:cont}, since local boundedness is readily implied by the definition of a moderate measure). Let thus $g_n\to g$ be a convergent sequence in $G$. Thanks to the local boundedness, it suffices to check the continuity of the orbit maps only for points in some dense subset $E$ of $\sH$. By the $\sigma$-finiteness of $m$, we can take for $E$ the subspace spanned by indicator functions of measurable sets $A \subset \Sigma$ with finite $m$-measure. But for these functions, the $L^2(\wt{\mu})$-convergence of $g_n \mathbf{1}_A$ to $g \mathbf{1}_A$ is equivalent to the $L^2(m)$-convergence, because $m$ and $\wt{\mu}$ are two equivalent $\sigma$-finite measures. Since the modular homomorphism $\Delta_G$ is continuous, this is nothing but a special case of the continuity of $\lambda$, the (isometric) left regular representation of $G$ on $L^2 (\Sigma, m; V)$ given by $(\lambda(g) f)(s) = \Delta_G (g)^\frac{1}{2} f(g\inv s)$. By Proposition~1.1.3 in~\cite{Monod}, in order to get the latter, it is enough to check that orbit maps are norm-measurable, for which we borrow the argument from appendix~D, Theorem~1.2.1 in~\cite{Buehler_AMS}. For any $\xi, \eta \in L^2 (\Sigma, m; V)$, consider the map
\begin{equation*}
 g \mapsto \langle \lambda(g) \xi, \eta \rangle = \Delta_G (g)^{\frac{1}{2}} \int_\Sigma \langle \xi (g\inv s), \eta (s) \rangle_V\, \mathrm{d}m(s).
\end{equation*}
By definition of a tychomorphism, $(g, s) \mapsto \langle \xi (g\inv s), \eta (s) \rangle_V$ is a measurable function on $G \times \Sigma $, which is moreover bounded if $\xi$ and $\eta$ are essentially bounded. Hence an application of~\cite[17.25]{Kechris_book} shows that $g \mapsto \langle \lambda(g) \xi, \eta \rangle$ is measurable when $\xi$ and $\eta$ are bounded, hence for any $\xi$ and $\eta$ by density. In particular, the orbit maps $g \mapsto \lambda(g) \xi$ are weakly measurable. By~\cite[Lemma~3.3.3]{Monod}, they are also norm-measurable, as desired.

Consider now the subset $W \subset \sH$ of functions $f$ that are $H$-equivariant in the sense that $h f = \alpha (h) \circ f$ holds for every $h \in H$. Then $W$ is a $G$-invariant closed subset that is stable under affine combinations. Moreover, for any $g \in G$, the function $\beta_g\colon\Sigma \to V\colon s \mapsto b\chi(g\inv s)$ lies in $W$, has norm bounded by that of $b$ and satisfies $\beta_g = g \beta_e$. Hence the restriction of the $G$-action to $W$ is a continuous affine action of $G$ with a bounded orbit.

 Lastly, suppose for the sake of a contradiction that there is a $G$-fixed point in $W$. This would then descend to a measurable function $X \to V$ which is $H$-equivariant. Moreover, the latter is $\teta$-integrable thanks to the $L^2$-condition since $\teta$ is finite. Therefore, averaging over $X$, we would get a fixed point in $V$ since the $H$-action on $X$ preserves $\teta$ by Remark~\ref{rem:cocycle}.
\end{proof}

\section{Proofs of Theorems~\ref{thm:main} and~\ref{thm:tycho}}\label{sec:main}

Let $G$ be any locally compact group. Recall that $G$ admits a maximal normal amenable closed subgroup, the \emph{amenable radical} $\Ramen(G)$. For Theorem~\ref{thm:main}, we will use the following general splitting result from~\cite{Burger-Monod3} which relies notably on the solution to Hilbert's fifth problem.

\begin{thm}[Thm.~3.3.3 in~\cite{Burger-Monod3}]\label{thm:split}
Let $G$ be any locally compact group.

The quotient group $G/\Ramen(G)$ has a finite index open characteristic subgroup which splits as a direct product $S\times D$ where $S$ is a connected semi-simple Lie group with trivial center and no compact factors and $D$ is totally disconnected.

Moreover, $D$ is second countable if $G$ is $\sigma$-compact.\qed
\end{thm}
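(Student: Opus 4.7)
The plan is to reduce to the case $\Ramen(G) = 1$ by passing to $H := G/\Ramen(G)$ and then analyse the identity component $S := H^0$, which I claim is a connected semisimple Lie group with trivial center and no compact factors. The key input is the solution of Hilbert's fifth problem via the Iwasawa--Yamabe approximation theorem: every connected locally compact group is an inverse limit of Lie groups by compact normal subgroups. The maximal compact normal subgroup of $S$ is characteristic in $S$, hence normal and amenable in $H$; since $\Ramen(H)=1$, it must be trivial, so $S$ itself is a connected Lie group. The amenable radical of $S$ is likewise characteristic in $S$, hence normal and amenable in $H$, hence trivial; this forces $S$ to be semisimple with trivial center and no compact factors.

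Next I would use the rigidity of semisimple Lie groups to exhibit the splitting. Since $S$ is centerless, the differentiation map $\mathrm{Aut}(S)\hookrightarrow \mathrm{Aut}(\mathfrak{s})$ (with $\mathfrak{s}=\mathrm{Lie}(S)$) is injective, so $\mathrm{Out}(S)$ embeds into the finite component group $\mathrm{Aut}(\mathfrak{s})/\mathrm{Inn}(\mathfrak{s})$. The conjugation map $H \to \mathrm{Out}(S)$ thus has open finite-index kernel $H'$, which is characteristic in $H$: any automorphism $\alpha$ of $H$ preserves $S=H^0$ and conjugates this map by the class of $\alpha|_S$, preserving the triviality class. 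On $H'$ the conjugation action on $S$ is inner, and the triviality of $Z(S)$ yields a canonical continuous retraction $\phi\colon H' \to S$ which is the identity on $S$. Setting $D := \ker \phi = C_{H'}(S)$, the equalities $H' = S\cdot D$ and $S\cap D = \{e\}$ give an internal direct product $H' = S\times D$; the factor $D$ is totally disconnected because $D^0 \subseteq H^0 \cap D = \{e\}$.

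For the second-countability assertion, I would invoke the Kakutani--Kodaira theorem: a $\sigma$-compact locally compact group admits a compact normal subgroup with second-countable quotient. Such a subgroup is amenable, hence contained in $\Ramen(G)$, so $H$ is itself second countable; its open subgroup $H'$ and direct factor $D$ then inherit this property.

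The main obstacle is the opening step of upgrading a connected locally compact group from abstract to Lie, which rests on the full strength of Gleason--Montgomery--Zippin. A secondary technical point is verifying that the conjugation map $H\to\mathrm{Aut}(S)$ is continuous with image in a group whose outer part is discrete, but this follows routinely once $S$ is identified as a semisimple Lie group; the rest of the argument is then an algebraic assembly around a well-understood semisimple core.
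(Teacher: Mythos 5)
You should first be aware that the paper supplies no proof of the splitting statement itself: it is imported verbatim from Burger--Monod (Thm.~3.3.3 there), and the only argument actually present in the text is the justification of the final second-countability claim --- Kakutani--Kodaira yields a compact normal subgroup of $G$ with second countable quotient, and this subgroup, being compact, lies in $\Ramen(G)$ --- which coincides exactly with your closing paragraph. Your reconstruction of the main statement is correct and follows what is essentially the standard route: kill the amenable radical, use Gleason--Montgomery--Zippin--Yamabe to identify $S=H^\circ$ as an adjoint semisimple Lie group without compact factors, pass to the open finite-index kernel $H'$ of the outer conjugation action using the finiteness of $\mathrm{Out}(\mathfrak{s})$, and split off the centralizer of $S$ via the retraction $\phi$. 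Two points deserve a little more care than your ``routine'' flag suggests. First, the approximating compact normal subgroups of $S$ furnished by Gleason--Yamabe are not characteristic, so to see that they die in $H$ you must route through a characteristic object; your choice of the maximal compact normal subgroup works (its existence for connected locally compact groups is a theorem of Iwasawa/Montgomery--Zippin and should be cited), and a slightly more economical variant is to note that $\Ramen(S)$ is characteristic in $S$, hence normal in $H$, hence trivial, and that it contains every compact normal subgroup of $S$, so that a single application of Gleason--Yamabe then makes $S$ a Lie group. Second, the openness of $H'$ requires the conjugation map $H\to\mathrm{Aut}(S)\cong\mathrm{Aut}(\mathfrak{s})$ to be continuous for a topology in which $\mathrm{Inn}(\mathfrak{s})$ is open; this holds because $\mathrm{Inn}(\mathfrak{s})$ is the identity component of the Lie group $\mathrm{Aut}(\mathfrak{s})$ and the map $h\mapsto d(c_h)_e$ is continuous, but it is the one step where a genuine (if standard) verification is needed rather than pure algebra. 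With those references supplied, your argument is a complete proof of a statement the paper only quotes.
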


The statement on second countability is not made explicitly in loc.\ cit., but it follows from Satz~5 in~\cite{Kakutani-Kodaira} that when $G$ is $\sigma$-compact, it has a second countable quotient by a compact normal subgroup --- which is necessarily contained in $\Ramen(G)$.

\bigskip

We first prove Theorem~\ref{thm:tycho}. Let thus $G$ be a non-amenable locally compact second countable group. Since there are inclusions $F_r < F_2$ for all $0\leq r\leq \aleph_0$ we only consider $r=2$, using Lemma~\ref{lem:comp}.

We shall distinguish two cases; as a first case, suppose that the connected component $G^\circ$ of the identity is already non-amenable. Then $G$ contains a discrete non-abelian free subgroup:

\begin{lem}\label{lem:ping-pong}
A connected locally compact group is amenable if and only if is does not contain any discrete non-abelian free subgroup.
\end{lem}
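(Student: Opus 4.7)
The direct implication is routine: every discrete subgroup of an amenable locally compact group is amenable, whereas a non-abelian free group is not, so the presence of a discrete non-abelian free subgroup forbids amenability. I will focus on the substantial converse. The plan is three-step: reduce to the Lie case via Gleason--Yamabe, handle connected non-amenable Lie groups via classical semisimple structure theory, and lift the resulting free subgroup through a compact kernel.

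For the reduction, by the Gleason--Yamabe resolution of Hilbert's fifth problem I would pick a compact normal subgroup $K \se G$ such that $L := G/K$ is a connected Lie group; since $K$ is compact hence amenable and amenability is preserved under extensions, $L$ remains non-amenable. For the Lie case, I would invoke the Levi--Malcev decomposition to produce a closed connected semisimple Lie subgroup $S \se L$ with $L = \mathrm{Rad}(L) \cdot S$; were $S$ compact, $L$ would be compact-by-solvable and hence amenable, so $S$ is in fact a non-compact semisimple Lie group. The existence of a discrete non-abelian free subgroup in such an $S$ is then classical: one can appeal to the Tits alternative, or more concretely embed $\SL_2(\RR)$ through an $\mathfrak{sl}_2$-triple via Jacobson--Morozov and play Schottky ping-pong on the upper half-plane. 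I will denote the resulting discrete free subgroup by $F = \langle x_1, x_2\rangle \se L$.

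It remains to lift $F$ back to $G$. Letting $\pi\colon G \to L$ be the projection, I would choose arbitrary lifts $\widetilde x_1, \widetilde x_2 \in G$ and set $H := \langle \widetilde x_1, \widetilde x_2\rangle$. Any relation $w(\widetilde x_1, \widetilde x_2) = e$ would project to a relation in the free group $F$ and hence be trivial, so $H$ is free of rank two and $\pi|_H \colon H \to F$ is bijective. The preimage $\pi\inv(F)$ is closed in $G$, and $K$ sits inside it as a compact, normal, and \emph{open} subgroup (open because $F$ is discrete in $L$); since $H$ meets $K$ only at the identity, the identity is isolated in $H$, whence $H$ is discrete in $G$. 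The hard part of the argument lies precisely here: freeness of $H$ is automatic once one observes that free generators admit arbitrary lifts, but discreteness of the lifted subgroup would fail if the kernel of $\pi$ were merely amenable rather than compact, which is exactly why the compactness in Gleason--Yamabe is indispensable and why I would not attempt to lift directly through the amenable radical.
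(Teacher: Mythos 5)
Your proof is correct, and in substance it follows the same strategy as the paper: reduce to a semisimple Lie group, produce a discrete free subgroup by ping-pong with hyperbolic elements, and lift free generators back, using that the kernel is open in the preimage of the discrete subgroup. The routes differ only in the reduction step: the paper quotients by the amenable radical $\Ramen(G)$ and invokes its Theorem~\ref{thm:split} (Hilbert's fifth problem) to land directly on a centre-free semisimple Lie group of positive rank, whereas you quotient by a compact normal subgroup from Gleason--Yamabe and then extract a non-compact Levi factor; both are fine, and yours is marginally more self-contained at the cost of a small unaddressed point (closedness of the Levi subgroup, and the possible infinite centre of $S$, which is why the paper works modulo the centre and then lifts).

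The one thing you should correct is your closing claim that compactness of the kernel is ``indispensable'' for discreteness of the lift and that lifting through the amenable radical would fail. Reread your own argument: you showed that $K$ is \emph{open} in $\pi\inv(F)$ solely because $K=\pi\inv(e)$ and $F$ is discrete in $L$, and then that $H\cap K=\{e\}$ is open in $H$. Nowhere did you use compactness. The identical argument works for any closed normal kernel $N\trianglelefteq G$: the preimage $\pi\inv(F)$ contains $N$ as an open subgroup, a free lift of the generators meets $N$ trivially (a word in the lifts lying in $N$ projects to a relation in $F$, hence is trivial), so the lift is discrete. This is exactly why the paper can write ``being free, it can be lifted to $H$ and any such lift remains discrete'' while lifting through the (generally non-compact) amenable radical. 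So your proof stands, but the stated reason for preferring Gleason--Yamabe over the amenable radical is a misconception rather than a genuine obstruction.
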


\begin{proof}
Let $H$ be a non-amenable connected locally compact group and let $L=H/\Ramen(H)$ be the quotient by its amenable radical. In the present case, there is no splitting and Theorem~\ref{thm:split} is a direct consequence of the solution to Hilbert's fifth problem; it implies that $L$ is a semi-simple Lie group of positive $\RR$-rank. In particular, it contains a closed subgroup $L_1<L$ of rank one. The classical ping-pong lemma applied to suitable hyperbolic elements of $L_1$ acting on the boundary of the symmetric space of $L_1$ provides a free subgroup $F_2$ of $L_1$ (modulo its center) and shows moreover that this $F_2$ is discrete. Being free, it can be lifted to $H$ and any such lift remains discrete.

The converse is elementary since \emph{closed} subgroups of amenable locally compact groups remain amenable (see~\cite[Theorem~3.9]{Rickert67}).
\end{proof}

Therefore we have a tychomorphism from our discrete non-abelian free subgroup to $G$ as indicated in Example~\ref{ex:subgroup} and thus also one from $F_2$ to $G$ by Lemma~\ref{lem:comp}.

\medskip

The second case is when $G^\circ$ is amenable. By Proposition~\ref{prop:liftfree}, it suffices to produce a tychomorphism from $F_2$ to $G / G^\circ$. Let $G_1<G/ G^\circ$ be the kernel of the modular homomorphism of $G / G^\circ$. This closed normal subgroup is unimodular; in fact, it is the maximal unimodular closed normal subgroup, see Proposition~10 in~\cite[VII \S 2 \No 7]{NBourbakiINT78}. Observe that $G_1$ also remains totally disconnected and non-amenable. By considering the directed family of subgroups of $G_1$ generated by a compact neighbourhood of the identity, we can choose a compactly generated subgroup $G_2 < G_1$ which is non-amenable, and still unimodular since it is open. Being totally disconnected, it contains some compact-open subgroup $K$. By Theorem~\ref{thm:tychoGL}, there is a tychomorphism from $F_2$ to the quotient of $G_2$ by the core of $K$, hence there exists one from $F_2$ to $G_2$ by Proposition~\ref{prop:liftfree}. We can compose it with the inclusions $G_2 < G_1 < G / G^\circ$ thanks to Lemma~\ref{lem:comp} in order to get a tychomorphism from $F_2$ to $G / G^\circ$, as desired.

This finishes the proof of Theorem~\ref{thm:tycho}.

\medskip
For the Lie oriented reader, we point out the following reformulation of Theorem~\ref{thm:tycho} (and its easy converse).

\begin{cor}
Let $G$ be a locally compact second countable group. Then $G$ is amenable if and only if it does not admit a tychomorphism from $\SL_2(\RR)$.
\end{cor}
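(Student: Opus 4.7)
The plan is to view this corollary as a direct reformulation of Theorem~\ref{thm:tycho}, bridged to $\SL_2(\RR)$ via the classical fact that $F_2$ embeds as a lattice in $\SL_2(\RR)$. Since both $F_2$ and $\SL_2(\RR)$ are unimodular, such a lattice embedding will yield tychomorphisms in \emph{both} directions (what Remark~\ref{rem:cocycle} dubs a measure equivalence in the unimodular setting), and the rest is pure composition via Lemma~\ref{lem:comp}.

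Concretely, I first need to observe that $F_2$ sits as a lattice in $\SL_2(\RR)$. The principal congruence subgroup $\Gamma(2) < \SL_2(\ZZ)$ has finite covolume in $\SL_2(\RR)$ and fits into the central extension $1 \to \{\pm I\} \to \Gamma(2) \to \pi(\Gamma(2)) \to 1$, whose quotient $\pi(\Gamma(2)) < \mathrm{PSL}_2(\ZZ)$ is isomorphic to $F_2$. Since $H^2(F_2, \ZZ/2)$ vanishes, this extension splits and exhibits $F_2$ itself as an index-two (hence still a) lattice in $\SL_2(\RR)$. Now take $\Sigma = (\SL_2(\RR), m_{\SL_2(\RR)})$ with $F_2$ acting by left multiplication and $\SL_2(\RR)$ by right multiplication (intertwined via inversion with the left action on $(\SL_2(\RR), \widecheck m_{\SL_2(\RR)})$). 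Example~\ref{ex:tycho} already identifies $\Sigma$ as a tychomorphism from $F_2$ to $\SL_2(\RR)$. Crucially, the \emph{very same} data is \emph{also} a tychomorphism from $\SL_2(\RR)$ to $F_2$: the lattice hypothesis upgrades the $F_2$-amplification from merely an amplification to a \emph{finite} one (Example~\ref{ex:subgroup}), while the $\SL_2(\RR)$-action remains a trivial finite amplification, the unimodularity of $\SL_2(\RR)$ rendering harmless the distinction between $m$ and $\widecheck m$.

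With this symmetric bridge in hand, both implications follow by composition through Lemma~\ref{lem:comp}. If $G$ admits a tychomorphism from $\SL_2(\RR)$, I compose it with the tychomorphism from $F_2$ to $\SL_2(\RR)$ to obtain one from $F_2$ to $G$; by the remark following Theorem~\ref{thm:tycho}, this forces non-amenability of $G$. Conversely, if $G$ is non-amenable, Theorem~\ref{thm:tycho} furnishes a tychomorphism from $F_2$ to $G$, which pre-composed with the reverse tychomorphism from $\SL_2(\RR)$ to $F_2$ yields the desired tychomorphism from $\SL_2(\RR)$ to $G$. I anticipate no serious difficulty; the only point that requires genuine care is the lattice embedding of $F_2$ in $\SL_2(\RR)$ (as opposed to a merely discrete one), handled by the splitting argument above.
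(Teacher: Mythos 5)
Your proof is correct and follows essentially the same route as the paper, whose one-line argument is precisely that $\SL_2(\RR)$ is measure equivalent to its lattices (which include non-abelian free groups), after which both implications are compositions of tychomorphisms via Lemma~\ref{lem:comp}, Theorem~\ref{thm:tycho}, and the remark following it. Your extra care in exhibiting $F_2$ as an honest lattice (splitting the central extension over $\Gamma(2)$) and in checking that the single coupling $\Sigma=\SL_2(\RR)$ is a tychomorphism in both directions simply fills in the details the paper leaves implicit.
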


Indeed the corollary follows by composition of tychomorphisms since any locally compact group is measure equivalent to its lattices, which in the case of $\SL_2(\RR)$ include non-abelian free groups.

\bigskip

We now turn to Theorem~\ref{thm:main}. The starting point is provided by Section~\ref{sec:initial}, which shows that for instance the free group $F_4$ has a fixed point free affine action on a Hilbert space with a bounded orbit. The same holds for any $F_r$ with $2\leq r\leq \aleph_0$ by Theorem~\ref{thm:inductiontycho} since $F_4<F_r$. We will not apply Theorem~\ref{thm:tycho} in its full generality because we did not construct moderate measures for all groups (though this can be done using~\cite{Haagerup-Przybyszewska}, see Section~\ref{sec:lengths-measures}). Anyway, it suffices to prove Theorem~\ref{thm:main} for the quotient $G/\Ramen(G)$, which admits a moderate measure in view of Theorem~\ref{thm:split}, Lemma~\ref{lem:measindfin} and Corollary~\ref{cor:ex:moderate} (remark that if $G_1$ and $G_2$ are two groups carrying moderate lengths $\ell_i$, then $\ell (g_1, g_2) := \ell_1 (g_1) + \ell_2 (g_2)$ is a moderate length on their product). Hence Theorem~\ref{thm:inductiontycho} and Theorem~\ref{thm:tycho} provide a fixed point free affine action on a Hilbert space with a bounded orbit, finishing the proof of Theorem~\ref{thm:main}.

\section{Proof of Theorem~\ref{thm:wr}}\label{sec:wr}

We begin by recording an elementary common knowledge fact.

\begin{lem}\label{lem:open}
Let $H$ be an open subgroup of a topological group $G$. If $H$ admits a non-unitarisable uniformly bounded continuous representation on a Hilbert space, then so does $G$.
\end{lem}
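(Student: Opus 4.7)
The plan is to perform the classical induction of representations, which becomes particularly transparent because $H$ is open in $G$, so the coset space $G/H$ is discrete. Let $\pi\colon H\to\GL(\sH)$ be the given non-unitarisable uniformly bounded continuous representation on a Hilbert space $\sH$, with $M:=\sup_{h\in H}\|\pi(h)\|<\infty$. Fixing a system of coset representatives $\{r_\alpha\}_{\alpha\in G/H}$ with $r_{eH}=e$, I would define the induced representation $\pi_{\mathrm{ind}}$ on $\sH_{\mathrm{ind}}:=\ell^2(G/H;\sH)=\bigoplus_{\alpha} \sH_\alpha$ by the usual formula
$$(\pi_{\mathrm{ind}}(g)f)(r_\alpha)=\pi(h(g,\alpha))\inv f(r_\beta),$$
where $g\inv r_\alpha=r_\beta h(g,\alpha)$ with $h(g,\alpha)\in H$.

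The first two verifications are routine. For each $g\in G$, the assignment $\alpha\mapsto\beta$ is a permutation of $G/H$ and $\pi_{\mathrm{ind}}(g)$ is the corresponding permutation of coordinates composed with the block-diagonal operator $\bigoplus_\alpha\pi(h(g,\alpha))\inv$; since each block has norm $\leq M$, the induced representation is uniformly bounded by the same constant $M$. For continuity, the discreteness of $G/H$ ensures that on finitely supported vectors the orbit map involves only finitely many coordinates, with the cocycles $h(\cdot,\alpha)$ continuous into $H$ thanks to the openness of $H$; uniform boundedness then propagates continuity to all of $\sH_{\mathrm{ind}}$ by Lemma~\ref{lem:cont}.

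The crux is to descend non-unitarisability from $\pi_{\mathrm{ind}}$ to $\pi$. The essential observation is that $H$ stabilises the coset $eH\in G/H$ and permutes the remaining cosets amongst themselves. Hence both $\sH_e$ and its Hilbert orthogonal complement $\sH_e^\perp=\bigoplus_{\alpha\neq eH}\sH_\alpha$ are $H$-invariant (the orthogonality being preserved precisely because $H$ acts on these complementary coordinates by an honest permutation), and a direct unwinding of the formula shows that the $H$-action on $\sH_e$ is exactly $\pi$. Now, were $\pi_{\mathrm{ind}}$ unitarised by some $S\in\GL(\sH_{\mathrm{ind}})$, the equivalent $G$-invariant inner product $\langle S\cdot,S\cdot\rangle$ would restrict to an $H$-invariant inner product equivalent to the original on the closed $H$-invariant subspace $\sH_e$, thereby unitarising $\pi$ and contradicting our hypothesis. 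This would complete the proof.

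The main obstacle I anticipate is not conceptual but rather a matter of bookkeeping: arranging conventions consistently so that what one reads off on $\sH_e$ is literally $\pi$ (rather than an equivalent variant like $h\mapsto\pi(h\inv)$, though this is harmless for unitarisation), and rigorously checking continuity in view of the subtleties between the various notions discussed in Lemma~\ref{lem:cont}.
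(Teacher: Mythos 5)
Your proposal is correct and follows essentially the same route as the paper: form the classically induced representation on $\ell^2(G/H,\sH)$ with the coset cocycle, check it is uniformly bounded and continuous, and observe that since $H$ is open the original representation sits inside the restriction to $H$ as the functions supported on the trivial coset, so a unitarisation of the induced representation would restrict to one of $\pi$. Your extra bookkeeping (the permutation/block-diagonal decomposition and the invariance of $\sH_e^\perp$) is sound but not needed beyond the $H$-invariance of $\sH_e$ itself.
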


\begin{proof}
Given the $H$-representation on $V$, we can form the \emph{usual} induced $G$-representation on $W=\ell^2(G/H, V)$, where $G/H$ is endowed with the counting measure and $G$ acts on $f\in W$ by $(gf)(x) = \gamma(g\inv, x)\inv f(g\inv x)$, where $\gamma\colon G\times G/H \to H$ is the (continuous) cocycle determined by a choice of representatives for the cosets. One verifies that this is indeed a well-defined uniformly bounded continuous representation. Since $H$ is open, the $H$-representation $V$ is contained in the restriction of $W$ from $G$ to $H$ as the space of maps supported on the trivial coset, whence the conclusion.
\end{proof}

We can and shall assume that $G$ is $\sigma$-compact and $A$ countable (then $A\wr_{G/O} G$ is also $\sigma$-compact). For let $A_0<A$ be an infinite countable subgroup and $G_0<G$ an open $\sigma$-compact subgroup (e.g.\ the subgroup generated by a compact neighbourhood of the identity). If $O<G_0$ is an open subgroup such that $A_0\wr_{G_0/O} G_0$ is non-unitarisable, then indeed $A\wr_{G/O} G$ is also non-unitarisable by Lemma~\ref{lem:open} since it contains $A_0\wr_{G_0/O} G_0$ as an open subgroup (noting that $O$ remains open in $G$).

We distinguish cases as in Section~\ref{sec:main}; suppose first that the connected component $G^\circ$ is amenable.

Then $G_1=G/\Ramen(G)$ is totally disconnected, non-amenable and second countable; moreover, the core of any compact-open subgroup $K<G_1$ is trivial. We choose such a $K$ and define $O<G$ to be its pre-image in $G$. Notice that it suffices to show that the group $A\wr_{G_1/K} G_1$ is non-unitarisable since it is a quotient of $A\wr_{G/O} G$. By Theorem~\ref{thm:tycho}, there exists a tychomorphism from $F_2$ to $G_1$. Let 
$$(\Sigma, m) \cong  (G_1, \widecheck{m}_{G_1})\times (X,\teta)  \cong  (H, m_H)\times (Z,\eta)$$
be the corresponding $G_1\times H$-space and its realizations as amplifications with associated cocycles $\alpha$ and $\beta$. At this point, we can for a while follow exactly the path taken in~\cite{Monod-Ozawa2009}:

Recall first that the non-unitarisability of $F_2$ is implied by the following a priori stronger statement: there is a unitary representation $(\pi, V)$ of $F_2$ such that the bounded cohomology $\mathrm{H}^1_\mathrm{b}(F_2, \sL(V))$ is non-trivial, where $\sL(V)$ is the Banach space of bounded operators of $V$ endowed with the isometric $F_2$-module structure given by conjugation by $\pi$. We refer to Lemma~4.5 in~\cite{PisierLNM} for this fact, recalling that $\pi$ can be taken to be the regular representation on $V=\ell^2(F_2)$ and that a (straightforward) translation from derivations to $1$-cocycles has to be made. We now consider the (classical unitarily) induced unitary $G_1$-representation $\sigma$ on $W=L^2(Z, \eta; V)$ defined by
$$ (\sigma(g) f)(s) = \Delta_{G_1}(g)^{-\frac12}\pi\big(\beta(g\inv, s)\inv\big) f(g\inv s).$$
We have again a corresponding $G_1$-representation on the von Neumann algebra $\sL(W)$ given by conjugation by $\sigma$. This representation preserves the subalgebra $E=L^\infty_\mathrm{w*}(Z,\sL(V))$ of weak-* measurable bounded function classes. In fact, the resulting $G_1$-representation on $E$ is none other than the ``$L^\infty$-induced'' representation (i.e\ without the $\Delta_{G_1}$ factor) associated to the above $F_2$-representation on $\sL(V)$. In relation to this $L^\infty$-induced representation there is also a cohomological induction map
$$\mathrm{H}^1_\mathrm{b}(F_2, \sL(V)) \lra \mathrm{H}^1_\mathrm{b}(G_1, E).$$
The latter is injective. Indeed, the proof given in~\cite[\S 4.3]{Monod-Shalom2} for the case of ME couplings of discrete groups holds without changes.

\smallskip
In addition to its $G_1$-structure, $\sL(W)$ is a module over the von Neumann algebra $L^\infty(Z)$ and this module structure is compatible with the $G_1$-representation on $L^\infty(Z)$ in the sense that conjugating by $g\in G_1$ the action of some $\fhi\in L^\infty(Z)$ on $\sL(W)$ simply gives the action of $g\fhi= \fhi\circ g\inv$. It follows that we will turn $\sL(W)$ into a (dual isometric) coefficient $A\wr_{G_1/K} G_1$-module as soon as we define any representation of $A$ into the unitary group of the subalgebra $L^\infty(Z)^K$ of $L^\infty(Z)$. Notice furthermore that the $L^\infty(Z)$-module structure of $\sL(W)$ preserves $E$ so that $E$ will also inherit that $A\wr_{G_1/K} G_1$-action.

\begin{rem}\label{rem:smooth}
We will need repeatedly that for any compact subgroup $K'<G_1$ the algebra $L^\infty(Z)^{K'}$ is canonically identified with $L^\infty(K'\backslash Z)$, where $K'\backslash Z$ is an ordinary quotient of $Z$ (in contrast to the case of non-compact groups, where one needs to introduce a space of ergodic components which would be unsuitable for some arguments below). This is the case because the compactness of $K'$ ensures that its action on $Z$ is smooth in the ergodic-theoretical sense, as follows from a result of Varadarajan (Theorem~3.2 in~\cite{Varadarajan63}).
\end{rem}

We now choose an $A$-representation into the unitary group of $L^\infty(K\backslash Z)$ that generates $L^\infty(K\backslash Z)$ as a von Neumann algebra (this exists e.g.\ by the argument given on page~257 in~\cite{Monod-Ozawa2009}). We write $N=\bigoplus_{G_1/K} A$, so that $A\wr_{G_1/K} G_1=N\rtimes G_1$.

\begin{lem}
Under the embedding $E\to \sL(W)$, we have ${E =  \sL(W)^N}$.
\end{lem}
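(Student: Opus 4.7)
The plan is to identify $\sL(W)^N$ with the commutant of a von Neumann algebra and then invoke the classical decomposable-operators theorem. Since the $N$-action on $\sL(W)$ is implemented by conjugation by unitaries drawn from $L^\infty(Z)\subseteq \sL(W)$ (through the chosen $A$-representation at each coordinate $gK$), one has $\sL(W)^N = \mathcal{A}'$, where $\mathcal{A}\subseteq L^\infty(Z)$ denotes the von Neumann subalgebra generated by the image of $N$. The easy inclusion $E\subseteq \sL(W)^N$ is immediate: elements of $E$ act pointwise in the $z$-variable and therefore commute with multiplication by functions in $L^\infty(Z)$.

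The key step is to show $\mathcal{A} = L^\infty(Z)$. By our choice, the $A$-representation at coordinate $eK$ generates $L^\infty(Z)^K = L^\infty(K\backslash Z)$ (using Remark~\ref{rem:smooth} for the compact subgroup $K$), and conjugation by $g$ --- which implements the action of $A$ at coordinate $gK$ --- transports this to a generating family for $L^\infty(Z)^{gKg\inv}$; hence $\mathcal{A}$ contains each such subalgebra for all $gK\in G_1/K$. They jointly generate $L^\infty(Z)$ precisely when the pulled-back $\sigma$-algebras from the quotients $(gKg\inv)\backslash Z$ separate points of $Z$ almost surely. Two points not separated by any such pullback must lie in a common $gKg\inv$-orbit for every $gK$; essential freeness of the (non-singular) $G_1$-action on $Z$ --- inherited from the free $G_1$-action on the space $X$ of Section~\ref{sec:GL} --- together with $\bigcap_{g\in G_1} gKg\inv = \Core_{G_1}(K) = \{e\}$ forces such points to be almost everywhere equal, yielding $\mathcal{A} = L^\infty(Z)$.

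Finally, since $L^\infty(Z)$ acts on $W = L^2(Z,\eta)\otimes V$ by multiplication on the first tensor factor, the classical theorem on decomposable operators identifies its commutant in $\sL(W)$ as $L^\infty_\mathrm{w*}(Z,\sL(V)) = E$. Combining this with $\sL(W)^N = \mathcal{A}' = L^\infty(Z)'$ gives the desired identification $\sL(W)^N = E$. The main obstacle throughout is the identification $\mathcal{A} = L^\infty(Z)$, which rests on essential freeness of the $G_1$-action on $Z$ and on the vanishing of the core of $K$; this should be verified by tracing the construction of the tychomorphism in Section~\ref{sec:GL} down to the non-singular $G_1$-action on $Z$ given by Remark~\ref{rem:cocycle}.
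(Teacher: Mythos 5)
Your reduction follows the same skeleton as the paper's: $\sL(W)^N$ is the commutant of the von Neumann algebra $\mathcal{A}$ generated by the image of $N$, while $E=L^\infty_{\mathrm{w*}}(Z,\sL(V))$ is the commutant of $L^\infty(Z)\otimes\CC$ by the commutation theorem for tensor products, so everything comes down to showing that the subalgebras $L^\infty(Z)^{gKg\inv}$, $gK\in G_1/K$, generate $L^\infty(Z)$. The gap lies in how you establish this generation claim. You argue that two points of $Z$ lying in a common $gKg\inv$-orbit for \emph{every} $gK$ must coincide, and for this you invoke essential freeness of the $G_1$-action on $Z$ (to force the witnessing elements $k_g\in gKg\inv$ to be a single element of $\Core_{G_1}(K)=\{e\}$). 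This freeness is established nowhere, and it is far from automatic for the coupling actually in play: the tychomorphism supplied by Theorem~\ref{thm:tycho} is a \emph{composite}, spliced together from the Gaboriau--Lyons coupling, the lift of Proposition~\ref{prop:liftfree}, and subgroup-inclusion tychomorphisms as in Example~\ref{ex:tycho} and Lemma~\ref{lem:comp}. The inclusion tychomorphism for an open subgroup $G_2<G_1$ contributes a homogeneous-space factor $G_2\backslash G_1$ to $Z$, on which point stabilizers are conjugates of the non-compact open group $G_2$; whether the cocycle-twisted action on the full product is still essentially free requires tracing through every layer of the composition, and ``inherited from the free action on $X$ of Section~\ref{sec:GL}'' does not do this. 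You flag the verification yourself as outstanding, so the proof does not close as written.

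The paper's argument shows that freeness can be dispensed with entirely, at the cost of two extra ingredients. First, for \emph{compact} subgroups $K',K''<G_1$ one has $L^\infty(Z)^{K'}\vee L^\infty(Z)^{K''}=L^\infty(Z)^{K'\cap K''}$ (a consequence of Varadarajan's smoothness theorem via Remark~\ref{rem:smooth}; this identity fails for non-compact groups), so iterating, the algebra generated by the $L^\infty(Z)^{gKg\inv}$ contains $L^\infty(Z)^{K'}$ for every \emph{finite intersection} $K'$ of conjugates of $K$. Second, a martingale convergence argument along a cofinal sequence of such $K'$ identifies the weak-* closure of their union with $L^\infty$ of the common refinement of the partitions $Z\twoheadrightarrow K'\backslash Z$. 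Triviality of that refinement then needs no freeness: if $z'$ lies in the $K'$-orbit of $z$ for every finite intersection $K'$, the sets $\{k\in K' : kz=z'\}$ form a decreasing net of non-empty \emph{compact} sets, hence meet in some $k\in\bigcap K'=\Core_{G_1}(K)=\{e\}$, forcing $z=z'$. To salvage your shorter route you would have to prove essential freeness of the $G_1$-action on $Z$ for the composite coupling; otherwise you should pass through finite intersections of conjugates as above.
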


\begin{proof}
In view of Theorem~IV.5.9 in~\cite{TakesakiI}, what we need to prove is the following claim. The von Neumann subalgebra of $L^\infty(Z)$ generated by the union of all $g L^\infty(Z)^{K} = L^\infty(Z)^{g Kg\inv}$, where $gK$ ranges over $G_1/K$, is $L^\infty(Z)$ itself.

We first record that for any two compact subgroups $K', K''<G_1$ the von Neumann algebra generated by $L^\infty(Z)^{K'}$ and $L^\infty(Z)^{K''}$ coincides with $L^\infty(Z)^{K'\cap K''}$ (this can fail for non-compact groups). Indeed this follows immediately from the consequence of Varadarajan's theorem indicated in Remark~\ref{rem:smooth}. Consider now the net $L^\infty(Z)^{K'}$ indexed by the directed set of all finite intersections $K'<G_1$ of $G_1$-conjugates of $K$. The normalized integration over $K'$ provides a conditional expectation from $L^\infty(Z)$ to $L^\infty(Z)^{K'}$ for each $K'$ which turns this net into an inverse system. Therefore, the martingale convergence theorem implies that the algebra generated by the union of all $L^\infty(Z)^{K'}$ is weak-* dense in the von Neumann subalgebra $B<L^\infty(Z)$ of functions that are measurable with respect to the common refinement of all partitions defined by all $Z\twoheadrightarrow K'\backslash Z$. (Although general nets bring complications to martingale convergence~\cite{Krickeberg}, here we can anyway restrict to a cofinal \emph{sequence} since $G_1$ is second countable.) Since $K$ has trivial core in $G_1$, it follows (using again Varadarajan's theorem) that in fact the common refinement is trivial and hence $B=L^\infty(Z)$. This implies the claim and thus finishes the proof.
\end{proof}

We can now conclude the proof exactly as in~\cite{Monod-Ozawa2009}: We know that $\mathrm{H}^1_\mathrm{b}(G_1, E)$ is non-trivial and that ${E\cong  \sL(W)^N}$. Moreover, the amenability of $N$ implies that $\mathrm{H}^1_\mathrm{b}(G_1, \sL(W)^N)$ can be identified with $\mathrm{H}^1_\mathrm{b}(N\rtimes G_1, \sL(W))$, see~\cite[7.5.10]{Monod}. Appealing again to Lemma~4.5 in~\cite{PisierLNM}, we deduce that the group $N\rtimes G_1$ is non-unitarisable, finishing the proof of Theorem~\ref{thm:wr} in the case where $G^\circ$ is amenable.

\medskip
If on the other hand $G^\circ$ is non-amenable, then Theorem~\ref{thm:split} implies that after passing to an open subgroup (which we can by Lemma~\ref{lem:open}), our group has a quotient which is a connected non-compact simple Lie group. As recalled in the Introduction, the particular example of $\SL_2(\RR)$ was actually the first example of a non-unitarisable group. Thanks to the substantial theory available for representations of simple Lie groups, this example is known to generalize to all connected non-compact simple Lie groups, see Remark~0.8 in~\cite{Pisier_survey}. In conclusion, $G$ itself is non-unitarisable in this case, and hence so is a fortiori any extension of $G$. This completes the proof of Theorem~\ref{thm:wr}.

\section{Remarks and questions}\label{sec:rems}

Our proof of Theorem~\ref{thm:main} is rather indirect and uses tychomorphisms.

\begin{question}
Is there an elementary proof of Theorem~\ref{thm:main}, not relying on Theorem~\ref{thm:tycho}?
\end{question}

This question arises even for discrete groups.

\begin{rem}
 The classical fixed point property for amenable groups concerns compact convex sets in general locally convex spaces. Consider the following weakened property for a topological group $G$:

\itshape
Whenever $G$ acts continuously on a locally convex space $V$ by \emph{weakly} continuous affine transformations and preserves some \emph{weakly} compact convex nonempty set $K$ in $V$, it has a fixed point.

\upshape
This property is actually equivalent to the (formally stronger) usual one. Indeed, it is enough to check that continuous actions by weak-* continuous affine transformations on a dual Banach space $V$ preserving some weak-* compact convex nonempty set are encompassed in this situation (see~\cite[Remarks~4.3]{Rickert67}). But the weak topology of $V$ endowed with its weak-* topology is none other than its weak-* topology, whence the claim. 
\end{rem}

\begin{rem}
The action built in Section~\ref{sec:initial} has the additional property that the orbit of $0$ is not only bounded but relatively compact in norm. Indeed, the inclusion operator $\ell^\infty(G) \to \ell^2(G,\mu)$ is compact because $\mu$ is finite and atomic. A similar argument applies for moderate induction to discrete groups containing $F_2$. However, a priori this does not hold for induction through tychomorphisms since the corresponding measure space is in general not atomic. This motivates the following: 

\begin{question}
Let $G$ be a locally compact $\sigma$-compact group. Suppose that every continuous affine $G$-action on a Hilbert space preserving a \emph{norm-}compact non-empty set has a fixed point. Does it follow that $G$ is amenable?
\end{question}

Again, the question seems open even for $G$ discrete.

(One can equivalently consider norm-compact \emph{convex} sets in view of Mazur's theorem~\cite{Mazur30}.)
\end{rem}

\begin{rem}
 We already observed in the Introduction that the Banach--Steinhaus uniform boundedness principle forces the considered actions to have at least some unbounded orbit, and actually the points with unbounded orbits are dense. Moreover, the action built in Section~\ref{sec:initial} also has a dense subset of points with \emph{bounded} orbits: the image in $\ell^2 (G, \mu) / \RR$ of the functions with finite support.
\end{rem}

\begin{rem}
 An action on a compact convex subset of a Hilbert space does not necessarily extend to the whole ambient space. Let for instance $V$ be the space $\ell^2 (\ZZ \setminus\! \{0\})$ and $K$ in $V$ be the compact convex subset of points $x$ such that $\left| x_n \right| \leqslant n\inv$ and $\left| x_{-n} \right| \leqslant n^{-2}$ for $n > 0$. Define the linear transformation $T\colon K \to K$ by
\begin{equation*}
 \left(T(x)\right)_n = n x_{-n} \qquad \left(T(x)\right)_{-n} = \frac{x_n}{n}
\end{equation*}
for any $n > 0$. This is a continuous involution of $K$. However, any linear extension of $T$ to $V$ should map $\delta_n$ to $n\delta_{-n}$ for $n > 0$, which is impossible.
\end{rem}

\begin{rem}
In order to give some context to Lemma~\ref{lem:cont}, we propose here a whole family of representations $\ro\colon G\to \GL(V)$ of a topological group $G$ on a Banach space $V$.

Consider any normed vector space $U$. Let $G$ be the additive group of $U$ endowed with the weak topology and let $V=\RR\oplus U^*$, where $U^*$ is the (strong) dual of $U$ and the sum is endowed with the $\ell^2$-sum of the two norms. In particular, $V$ is a separable Hilbert space if $U$ is so.

Finally, the representation $\ro$ is defined by $\ro(u)(t,x) = (t+x(u), x)$. Then every orbital map is continuous by definition of the weak topology. On the other hand, we have $\|\ro(u)\|\geq \|u\|$ and hence $\ro$ is locally bounded if and \emph{only if} $U$ is finite-dimensional.

We note in passing that the proof of Lemma~\ref{lem:cont} used the Banach--Steinhaus uniform boundedness principle; hence the latter fails for general nets, illustrating the specific assumptions made e.g.\ in~\cite[III \S 3 \No 6]{NBourbakiEVTIII}.
\end{rem}

Turning to the Dixmier problem, our next two questions would of course be moot if every (locally compact) unitarisable group were amenable. This seems questionable even in the discrete case; our questions, however, become obvious for discrete groups (see Lemma~\ref{lem:open}).

\begin{question}
Does unitarisability pass to closed subgroups of locally compact groups?
\end{question}

A preliminary question being:

\begin{question}
Let $G$ be a locally compact group containing a discrete (non-commutative) free subgroup. Is $G$ non-unitarisable?
\end{question}



\bibliographystyle{../BIB/amsalpha}
\bibliography{../BIB/ma_bib}
\end{document}